\newtheorem{theorem}{Theorem}[section]
\newtheorem{corollary}[theorem]{Corollary}
\newtheorem{lemma}[theorem]{Lemma}
\newtheorem{proposition}[theorem]{Proposition}
\newtheorem{reduction}{Reduction}
\newcommand{\myqed}{\hfill \ensuremath{\Box} \newline}
\newcommand{\ignore}[1]{}
\begin{document}

\title{On the Lipschitz constant of the RSK correspondence} 

\author{Nayantara Bhatnagar \thanks{Department of Computer Science,
    Hebrew University of Jerusalem, {\tt nayantara@cs.huji.ac.il}.
  Supported by a Lady Davis Fellowship.} \and
  Nathan Linial \thanks{Department of Computer Science, Hebrew
    University of Jerusalem, {\tt nati@cs.huji.ac.il}.}}

\maketitle 

\begin{abstract}
We view the RSK correspondence as associating to each permutation
$\pi \in S_n$ a Young diagram $\lambda=\lambda(\pi)$, i.e.
a partition of $n$. Suppose now that $\pi$ is left-multiplied by
$t$ transpositions,
what is the largest number of cells in $\lambda$
that can change as a result? It is natural refer to this question
as the search for the Lipschitz constant of the RSK correspondence.

We show upper bounds on this Lipschitz constant as a function of $t$.
For $t=1$, we give a construction of permutations that achieve this
bound exactly. For larger $t$ we construct permutations which come
close to matching the upper bound that we prove.

\end{abstract}

\section{Introduction}
The Robinson-Schensted-Knuth (RSK) correspondence \cite{Rob,Sch,Knu}
maps an arbitrary 
permutation $\pi \in S_n$ 
bijectively to an ordered
pair of Young tableaux of the same shape $\lambda=\lambda(\pi)$.
How much can $\lambda(\pi)$ change as we mildly vary $\pi$?
For example, if we left-multiply $\pi$ by $t$
transpositions, to what extent can $\lambda$ change\footnote{We use of
  the following standard asymptotic notation. We say $f(n) = o(g(n)$ iff $
  \lim_{n \to \infty} f(n)/g(n)=0$ and $f(n)=O(g(n))$ iff there is a
  constant $C$ and $n_0$ s.t. for $n>n_0$, $f(n) \le C
  g(n)$. Finally, $f(n) = \Omega(g(n))$ iff there is a
  constant $C$ and $n_0$ s.t. for $n>n_0$, $f(n) \ge C
  g(n)$}? We
begin with the case when $t=1$ and show
that the resulting Young diagram can differ from $\lambda$ on
at most $\sqrt{n/2}$ cells. We show 
that this bound is tight by giving explicit constructions of permutations $\pi$
for which this bound is attained where the diagrams differ in at least
$(1-o(1))\sqrt{n/2}$ cells.
We then turn to consider the same question for larger $t$ and show that
the corresponding diagram changes in at most
$O(\sqrt{nt \ln t})$
cells. The best constructions we know nearly
match this bound and yield, e.g., $(1-o(1))\sqrt{nt/2}$ changes
for $t=o(n)$.  
 
The outline of this paper is as follows. In the remainder of
this section we
recall some definitions and properties of Young tableaux and the RSK
correspondence. 
In Section
\ref{sec:t=1} we prove upper bounds on the Lipschitz
constant when $t=1$ and show a matching construction. In
Section \ref{sec:upper-bounds-general-t} we give upper bounds and
extend our constructions for the
case of general $t$. We conclude with some directions for further
research in Section \ref{sec:conclusions}.

\subsection{Notation and Preliminaries}
We recall some definitions and background on Young Tableaux and the RSK
algorithm here. For more detailed expositions refer to \cite{Ful, Mac}
or \cite{Sta}.

Let $n \in \mathbb N$ be a positive integer. A vector
$\lambda=(\lambda_1, \lambda_2, \dots)$ of positive integers  is a
{\em partition} of $n$ (denoted by $\lambda \vdash n$)
if
\[
\lambda_1 \ge \lambda_2 \ge \dots > 0 \ \mathrm{and}
\ \sum_{i}\lambda_i = n.
\]

The {\em Young diagram} (or diagram) of a partition $\lambda$ is a left-justified
array of cells with $\lambda_i$ cells in the $i$-th row for each
$i\geq 1$.
For example, the diagram of the partition $(5,5,3,2)$ is

\[
\begin{Young}
& & & &\cr
& & & &\cr
& &  \cr
& \cr
\end{Young}
\]

The cell in the $i$-th row and $j$-th column is referenced by its {\em
coordinate} $(i,j)$. Thus $(1,1)$ is the top leftmost cell of the diagram.

The {\em conjugate} of a partition $\lambda$, denoted by $\lambda'$ is
the partition whose diagram is the transpose of the diagram of $\lambda$.

A {\em standard Young tableau} (SYT or tableau) of size $n$ with
entries from $[n]$ is a
diagram whose cells are filled with the elements of $[n]$ in such a
way that the entries are strictly increasing from left to right along a
row as well as from top to bottom down a column. The {\em shape} of a
tableau $T$, denoted $sh(T)$ is the partition corresponding to the
diagram of $T$. For example, 

\[
\begin{Young}
1& 2& 4 & 7 \cr
3& 6 \cr
5   \cr
\end{Young}
\]

is a tableau of size $7$ of shape $(4,2,1)$. Note that the elements in
the cells of a SYT are distinct integers. Let
$\mathcal T_n$ denote the set of SYT of size $n$.

\subsection{The Robinson-Schensted-Knuth (RSK) Correspondence}
The RSK correspondence discovered by Robinson \cite{Rob}, Schensted
\cite{Sch} and further extended by Knuth \cite{Knu} is a bijection
between the set of permutations $S_n$ and pairs of tableau of size $n$
of the same shape. This bijection is intimately related to the 
representation theory of the symmetric group \cite{Jam,Dia}, the theory of 
symmetric functions \cite[Chapter 7]{Sta}, and the theory of
partitions \cite{And}.

The bijection can be defined through a {\em row-insertion} algorithm
first defined by Schensted \cite{Sch} in order to study the longest
increasing subsequence of a permutation. Suppose that we have a tableau
$T$. The row-insertion procedure below inserts a positive integer $x$
that is distinct from all entries of $T$, into
$T$ and results in a tableau denoted by $T \leftarrow x$.
\begin{enumerate}
\item Let $y$ smallest number larger than $x$ in the first row of
  $T$. Replace the cell containing $y$ with $x$. If there is no such
  $y$, add a cell containing $x$ to the end of the row.
\item If $y$ was removed from the first row, attempt to insert it into
  the next row by the same procedure as above. If there is no row to
  add $y$ to, create a new row with a cell containing $y$.
\item Repeat this procedure on successive rows until either a number
  is added to the end of a row or added in a new row at the bottom.
\end{enumerate}

The RSK correspondence from $S_n$ to $\{(P,Q) \in \mathcal T_n
\times \mathcal T_n\ : \ sh(P)=sh(Q)\}$ can now be defined as
follows. Let $\pi \in 
S_n$ and let $\pi_i$ denote the element of $[n]$ in position $i$ in
$\pi$. Let $P_1$ be the tableau with a single cell containing
$\pi_1$. Let $P_{j} = P_{j-1} \leftarrow \pi_j$ for all $1<j \le n$ and
set $P=P_n$. The tableau $Q$ is defined recursively in terms of tableaux $Q_i$
of size $i$ as follows. Let $Q_1$ be
the tableau with one cell containing the integer $1$. The equality of shapes 
$sh(Q_i)=sh(P_i)$ is maintained throughout the process. The cell of $Q_i$ containing $i$ is the
(unique) cell of $P_i$ that does not belong to $P_{i-1}$. The remaining cells of
$Q_i$ are identical to those of $Q_{i-1}$. Finally, set
$Q=Q_n$. We refer to $P$ as the {\em insertion tableau} and $Q$ is
the {\em recording tableau}.

Let $\pi \in S_n$ and let $(P,Q)$ be the corresponding tableaux under
the RSK correspondence. The
{\em shape of $\pi$} is $sh(P)=sh(Q)$ and will be denoted by
$\lambda=\lambda(\pi)$.
The RSK correspondence has numerous interesting properties
(see \cite{Ful,Mac,Knu-book} or \cite{Sta}). Some that will be useful in
particular are as follows.

\begin{proposition}
\label{prop:reverse_conjugate}
Let $\lambda=\lambda(\pi)$. Then
the diagram corresponding to $\pi^R$, the reversal of $\pi$,
is $\lambda'$, the conjugate of $\lambda$.
\end{proposition}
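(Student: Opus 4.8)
The plan is to derive this from Greene's theorem, the standard generalization of Schensted's longest-increasing-subsequence result. Recall Greene's theorem: for any $\sigma \in S_n$ and any $k \ge 1$, the partial sum $\lambda_1(\sigma) + \cdots + \lambda_k(\sigma)$ is the maximum number of entries of $\sigma$ that can be covered by a union of $k$ increasing subsequences, and dually $\lambda_1'(\sigma) + \cdots + \lambda_k'(\sigma)$ is the maximum number of entries covered by a union of $k$ decreasing subsequences. (For $k=1$ this is Schensted's theorem: the longest increasing subsequence has length $\lambda_1$ and the longest decreasing subsequence has length $\lambda_1'$.)

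The second ingredient is the elementary observation that reversal interchanges increasing and decreasing subsequences. Writing $\pi$ in one-line notation as $\pi_1 \cdots \pi_n$, we have $\pi^R = \pi_n \cdots \pi_1$, so a set of positions $i_1 < \cdots < i_m$ carrying an increasing subsequence of $\pi$ corresponds, under the relabeling $i \mapsto n+1-i$, to a set of positions carrying a decreasing subsequence of $\pi^R$, and conversely. This correspondence preserves cardinalities and, more to the point, preserves the number of distinct entries covered by a family of $k$ subsequences. Hence the maximum number of entries that $k$ increasing subsequences of $\pi^R$ can cover equals the maximum number that $k$ decreasing subsequences of $\pi$ can cover.

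Combining the two ingredients, for every $k \ge 1$ we obtain
\[
\lambda_1(\pi^R) + \cdots + \lambda_k(\pi^R) = \lambda_1'(\pi) + \cdots + \lambda_k'(\pi).
\]
A partition is determined by the sequence of its partial sums, so this forces $\lambda(\pi^R) = \lambda(\pi)' = \lambda'$, which is the assertion.

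The only substantive input is Greene's theorem itself; the reversal step is pure bookkeeping, and this is the step I expect to need the most care only insofar as one must state Greene's theorem precisely (union versus disjoint union, etc.). Since the proposition lies among the preliminaries, I would simply cite Greene's theorem (it appears, e.g., in \cite{Sta} and \cite{Ful}). Note that the $k=1$ case of Schensted alone does not suffice: all partial sums are needed to pin down the conjugate, so some tool of this strength is unavoidable. A slicker-looking alternative would be to invoke the theorem describing $\mathrm{RSK}(\pi^R)$ in terms of transposition together with Schützenberger evacuation applied to $\mathrm{RSK}(\pi)$, from which the shape statement is immediate; but that theorem is itself proved via jeu de taquin or Greene's theorem, so it is not a genuine shortcut.
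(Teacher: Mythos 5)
The paper itself does not prove Proposition~\ref{prop:reverse_conjugate}; it is listed among the standard properties of RSK with pointers to \cite{Ful,Mac,Knu-book,Sta}, so there is no in-text argument to compare against. Your proof is correct and is a reasonable way to establish the fact, and it has the advantage of relying only on Theorem~\ref{thm:Greene}, which the paper already quotes.

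One point deserves a flag. Theorem~\ref{thm:Greene} as stated in the paper gives only the \emph{increasing}-subsequence half of Greene's theorem: partial sums of $\lambda$ equal the maximum size of a union of $k$ increasing subsequences. Your argument also needs the dual half, that partial sums of $\lambda'$ equal the maximum size of a union of $k$ \emph{decreasing} subsequences. That dual statement is indeed part of Greene's theorem, but it is \emph{not} a formal consequence of the increasing half alone; a tempting route to it is to apply the increasing half to $\pi^R$ and then use precisely the reversal/conjugation identity you are trying to prove, which would be circular. So you must take the dual half as an independently proved ingredient (Greene's original argument establishes both halves symmetrically), not as a corollary of the half quoted in the paper. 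With that understood, the partial-sum identity
\[
\lambda_1(\pi^R)+\cdots+\lambda_k(\pi^R)=\lambda_1'(\pi)+\cdots+\lambda_k'(\pi)
\]
for all $k$ pins down $\lambda(\pi^R)=\lambda'$ exactly as you say.

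For contrast, the proof behind the references is Schensted's: one shows directly from the row-insertion algorithm that the insertion tableau of $\pi^R$ is the transpose of the insertion tableau of $\pi$ (the recording tableau changes by Sch\"utzenberger evacuation, but the shape transposes). That route is more elementary, needing only the insertion bumping rules and no Greene-type machinery, whereas yours is cleaner if Greene's theorem is already on the table, as it is here.
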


\begin{proposition}
\label{prop:inverse_RSK}
Let $(P,Q)$ be the tableaux corresponding to a permutation $\pi$ under
the RSK correspondence. Then the tableaux 
corresponding to the inverse permutation $\pi^{-1}$ are $(Q,P)$. Thus
the shape remains invariant upon inversion, i.e.,  $\lambda(\pi^{-1})
= \lambda(\pi)$. 
\end{proposition}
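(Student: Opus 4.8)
The plan is to recognize this as the symmetry theorem of the RSK correspondence (essentially due to Schützenberger), and to prove it by passing to a description of the pair $(P,Q)$ in which the roles of the insertion and recording tableaux are manifestly interchangeable. The construction I would use is Viennot's geometric ``light and shadows'' version of RSK.

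Concretely: represent $\pi$ by its diagram of points $D(\pi)=\{(i,\pi_i):1\le i\le n\}$ in the plane (the support of the permutation matrix). Declare that each point $(a,b)$ casts a shadow onto the quadrant $\{(x,y):x\ge a,\ y\ge b\}$, and let $L_1$ be the boundary of the union of all these shadows; it is a staircase path whose inward (southwest-pointing) corners are exactly the points of $D(\pi)$ lying on it. Delete those points, repeat on the survivors to obtain $L_2$, and so on, producing shadow lines $L_1,L_2,\dots$ that partition $D(\pi)$. From this data one reads off the first rows of $P$ and of $Q$: the first row of $P$ is the (increasing) sequence of smallest $y$-coordinates occurring on $L_1,L_2,\dots$, and the first row of $Q$ is the sequence of smallest $x$-coordinates. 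Feeding the outward (northeast-pointing) corners of the $L_k$'s back into the same procedure produces the second rows, and iterating this yields all of $P$ and all of $Q$. The substantive content here is the classical theorem of Viennot that the tableaux produced by this procedure really are the RSK tableaux of $\pi$; I would establish it by induction on $n$, checking that appending the letter $\pi_n$ and performing one Schensted row-insertion corresponds precisely to adjoining the point $(n,\pi_n)$ and re-growing the shadow lines. This verification is the main obstacle; once it is in hand the symmetry is a one-line observation.

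Granting Viennot's theorem, the proposition follows at once. Reflection of the plane across the diagonal $\{x=y\}$ sends the point $(i,\pi_i)$ to $(\pi_i,i)$, hence carries $D(\pi)$ onto $D(\pi^{-1})$; it also maps shadow quadrants to shadow quadrants, so it carries each shadow line $L_k$ of $D(\pi)$ to the corresponding shadow line of $D(\pi^{-1})$, merely interchanging the two coordinates throughout the entire iteration (including when one passes to the configuration of outward corners at the next level). Since $P$ is assembled from the $y$-coordinates and $Q$ from the $x$-coordinates at every stage, this reflection exchanges the two tableaux; thus the pair attached to $\pi^{-1}$ is $(Q,P)$. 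In particular $sh(P)=sh(Q)$ forces $\lambda(\pi^{-1})=\lambda(\pi)$.

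As an aside, if one wants only the weaker conclusion $\lambda(\pi^{-1})=\lambda(\pi)$ — which is all that is used in the sequel — there is a much shorter route via Greene's theorem: $\lambda_1+\dots+\lambda_k$ equals the maximum total length of a union of $k$ pairwise disjoint increasing subsequences of $\pi$, a quantity visibly unchanged when $\pi$ is replaced by $\pi^{-1}$ (an increasing subsequence of $\pi$ is a set of pairs $(i,\pi_i)$ increasing in both coordinates, and transposing all these pairs gives an increasing subsequence of $\pi^{-1}$), and a partition is determined by these partial sums. Alternatively one may invoke Fomin's growth diagrams, whose local rule is symmetric in its two ``directions'': the growth diagram of $\pi^{-1}$ is literally the transpose of that of $\pi$, which swaps the edge-readings that produce $P$ and $Q$.
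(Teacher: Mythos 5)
The paper does not actually prove this proposition: it is stated as background and the reader is referred to standard texts (Fulton, Macdonald, Knuth, Stanley), so there is no in-paper proof to compare against. Your proof is correct and is one of the standard ones. The Viennot shadow-line argument is precisely the classical proof of the RSK symmetry theorem (often attributed to Sch\"utzenberger, with the geometric form due to Viennot), and your description of it is accurate: shadow lines are equivariant under reflection across the diagonal, which swaps the $x$-readings giving $Q$ with the $y$-readings giving $P$. You are right that the only substantial verification is Viennot's theorem itself, that the light-and-shadows construction reproduces Schensted insertion, and your plan to prove this by induction on $n$ (one new point, one new insertion) is the usual way.

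One remark worth making: the aside you give at the end is arguably better suited to this paper than the full Viennot argument, and is in fact the route most consonant with the paper's own methods. The paper only ever uses the weaker consequence $\lambda(\pi^{-1})=\lambda(\pi)$ (to reduce right-multiplication to left-multiplication), and the paper already introduces and heavily relies on Greene's theorem (Theorem~\ref{thm:Greene}). As you observe, Greene's characterization via maximal unions of increasing subsequences is manifestly invariant under transposing each pair $(i,\pi_i)\mapsto(\pi_i,i)$, which immediately yields $\lambda(\pi^{-1})=\lambda(\pi)$ without the machinery of shadow lines or growth diagrams. So: your main argument proves the full (stronger) statement $(P,Q)\mapsto(Q,P)$, which is what the proposition asserts; your aside proves only the shape statement but does so with tools already present in the paper. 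Either is an acceptable replacement for the citation.
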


\subsection{Motivation and Related Work}

In view of the important role of the RSK correspondence, it is
natural to investigate various aspects of it. Thus
Fomin's appendix in \cite[Chapter 7]{Sta} starts with the following two
motivating questions:
\begin{enumerate}[(1)]
\item Given a partition $\lambda$, characterize those permutations $\pi$
for which $\lambda(\pi)=\lambda$.
\item Given a tableau $P$, characterize the permutations $\pi$ which have
$P$ as their insertion tableau.
\end{enumerate}

We consider an {\em approximate} version of such questions and ask
to what extent $\lambda$ changes as $\pi$ changes slightly.
Question (1) is answered by the following theorem of Greene.

\begin{theorem}[Greene \cite{Gre}]\label{thm:Greene}Let $\pi$ be a permutation,
 and suppose that the largest cardinality of the union of $j$
  increasing subsequences in $\pi$ is $\mu_j$,
then  $\lambda(\pi)=\lambda_1,\dots,\lambda_k$, where
$\lambda_1=\mu_1$ and $\lambda_j = \mu_j - \mu_{j-1}$ for all $j \ge 2$.
\end{theorem}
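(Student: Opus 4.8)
The plan is to deduce Greene's theorem from three facts, only the first of which requires real work. Write $w$ for the one-line word $\pi_1\pi_2\cdots\pi_n$ of $\pi$, let $P=P(\pi)$ be its insertion tableau, and for a word $w$ with distinct letters let $\mu_j(w)$ denote the largest cardinality of a union of $j$ increasing subsequences of $w$. I would establish: (i) $\mu_j(w)$ depends only on the Knuth equivalence class of $w$, equivalently (by Knuth's theorem that such classes are exactly the fibres of $w\mapsto P(w)$) only on the insertion tableau $P(w)$; (ii) $w$ is Knuth equivalent to the \emph{row word} $r(P)$ of $P$, obtained by reading the rows of $P$ from the bottom row upward, left to right within each row; and (iii) for a tableau $T$ of shape $\lambda$ one has $\mu_j(r(T))=\lambda_1+\cdots+\lambda_j$. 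Granting these, $\mu_j(\pi)=\mu_j(w)=\mu_j(r(P))=\lambda_1+\cdots+\lambda_j$ for every $j$, and taking successive differences gives $\lambda_1=\mu_1$ and $\lambda_j=\mu_j-\mu_{j-1}$ for $j\ge 2$, which is the assertion. Fact (ii) is the standard statement that Schensted-inserting the row word of a tableau reproduces that tableau; I would prove it by induction on the number of rows of $T$, inserting the entries of the top row in increasing order into the tableau built from the rows below and checking that each such insertion merely appends a cell to the current top row. So the substance is in (i) and (iii).

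For (iii): the $i$-th row of $T$, read left to right, is an increasing subsequence of $r(T)$, and the first $j$ rows are pairwise disjoint, so $\mu_j(r(T))\ge\lambda_1+\cdots+\lambda_j$. For the reverse inequality, observe that two cells of $T$ in a common column yield, in $r(T)$, two letters forming a descent: the lower cell of $T$ carries the larger entry but, being read earlier, occupies the earlier position, so these two cannot both lie in one increasing subsequence. Hence any single increasing subsequence of $r(T)$ meets each column of $T$ at most once, and a union of $j$ increasing subsequences contains at most $\min(j,\lambda'_c)$ letters from the $c$-th column of $T$; summing over $c$, $\mu_j(r(T))\le\sum_c\min(j,\lambda'_c)=\lambda_1+\cdots+\lambda_j$, the last equality being the usual count of the cells in the first $j$ rows of $\lambda$ (equivalently, the first $j$ columns of $\lambda'$). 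One may assume the $j$ subsequences in an optimal family are pairwise disjoint, since overlaps never enlarge their union.

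The crux, and the step I expect to be the main obstacle, is (i). By Knuth's theorem it suffices to check that $\mu_j$ is unchanged by a single elementary Knuth transposition, i.e. by replacing a factor (contiguous subword) $xzy$ by $zxy$, or $yxz$ by $yzx$, where $x<y<z$ in each case, together with the inverse moves; and since $\mu_j$ is invariant under reversing the word and relabelling $k\mapsto n+1-k$ (which carries increasing subsequences to increasing subsequences and interchanges the two families of moves), it is enough to treat one family. So take $w=u\,xzy\,v$ and $w'=u\,zxy\,v$ with $x<y<z$, and fix an optimal family $S_1,\dots,S_j$ of pairwise disjoint increasing subsequences of $w$. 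Only the letters $z$ and $x$ change their relative value-order in passing from $w$ to $w'$, so at most the one or two subsequences using $z$ or $x$ can be spoiled; I would repair the family by a local surgery at these two positions---rerouting a subsequence that used $z$ so that it uses $x$ instead, or conversely, or swapping the tails of the two offending subsequences at the letter where one overtakes the other---using $x<y<z$ to check that the resulting subsequences remain increasing, remain pairwise disjoint, and cover at least as many letters. Enumerating the handful of cases according to which of $\{x,z\}$ each $S_i$ meets (and whether $y$ or nearby letters of $u,v$ are used) is the careful but routine heart of the matter; running it in both directions gives $\mu_j(w)=\mu_j(w')$.

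An alternative to (i)--(ii) would be Viennot's geometric ``shadow line'' model of RSK, in which $\lambda_1$ is the number of first-generation shadow lines, the lower rows arise by iterating the construction on the set of upper corners, and $\mu_j$ is read off as the number of points met by $j$ successive ``layers'' of shadow lines; yet another is to follow Schensted's original bumping analysis and track directly how a maximal union of $j$ increasing subsequences evolves under a single row-insertion step. In every version the combinatorial surgery on families of increasing subsequences---whether phrased through Knuth moves, shadow lines, or insertion steps---is where the difficulty is concentrated, while (iii) and the surrounding bookkeeping are straightforward.
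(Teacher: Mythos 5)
The paper does not prove this theorem --- it is attributed to Greene \cite{Gre} and used throughout as a black box --- so there is no in-paper argument to compare your proposal against. Your route (reduce to the row word of $P(\pi)$ via Knuth equivalence, then count directly on $r(T)$) is the standard modern treatment found, e.g., in Fulton's \emph{Young Tableaux}; Greene's original 1974 proof is more direct and does not invoke Knuth's theorem, but either way the statement and its use here are the same. On the substance: your steps (i) and (iii) are sound, and the reverse-and-complement symmetry you use to dispose of the second family of Knuth moves is correct and does halve the work.

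Two points of precision are worth recording. In your sketch of (ii), inserting the entries $a_1<a_2<\cdots$ of the top row $R_1$ into the tableau $T''$ built from $R_2,\dots,R_m$ does \emph{not} ``merely append a cell to the current top row''; each $a_i$ lands in cell $(1,i)$, bumps the chain $T(2,i),T(3,i),\dots$ down one step, and the new cell appears at the \emph{bottom} of column $i$. The induction still closes because these bumping chains run down disjoint columns and leave every row a row of $T$, but the mechanism is bumping, not appending. For (i), the surgery should be pinned down: passing from $w=u\,xzy\,v$ to $w'=u\,zxy\,v$ with $x<y<z$, only the case where a single $S_i$ uses both $x$ and $z$ (necessarily as consecutive elements of $S_i$, since nothing sits between them positionally) needs repair. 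If $y$ is unused by the family, replace $z$ by $y$ in $S_i$; otherwise $y\in S_j$ for some $j\ne i$, say $S_j=(\dots,c,y,d,\dots)$, and one swaps tails, taking $S_i'=(\text{head of }S_i\text{ through }x,\ y,\ \text{tail of }S_j\text{ after }y)$ and $S_j'=(\text{head of }S_j\text{ before }y,\ z,\ \text{tail of }S_i\text{ after }z)$; the increasing and positional orders close exactly because $x<y<z$ together with $c<y$, $y<d$ and $z<b$ inherited from $S_i$ and $S_j$. The reverse inequality $\mu_j(w')\le\mu_j(w)$ needs no surgery at all, since no increasing subsequence of $w'$ can use two of $\{x,z\}$ and the single displaced letter slides to its new position without passing its neighbours in $S_i$. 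With these details filled in, your proposal is a complete and correct proof of the cited theorem.
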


In his study of the RSK correspondence, Knuth discovered certain
equivalence relations that are key to the solution of Question (2)
above. Two permutations are {\em Knuth 
equivalent} if one can be obtained from the other by certain
restricted sequences of adjacent transpositions. Knuth equivalent
permutations are the 
equivalency classes of permutations that have the same insertion tableau. 
For more on the subject, see~\cite{Sta}.

In order to make our question concrete, we need to specify two
measures of distance: One between permutations and the other between
diagrams. A natural metric on permutations is {\em left-multiplication} by
adjacent transpositions. An {\em 
adjacent transposition} is a permutation of the form
$(i,i+1)$. Left-multiplying $\pi$ by an adjacent transposition is denoted
by $(i,i+1) \circ \pi$ and means that first, the permutation $\pi$ is
applied and then the transposition. 
We denote the least number of adjacent
transpositions that transform the permutation $\pi$ to $\tau$
by $d(\pi, \tau)$. Recall that $d(\cdot,\cdot)$ is the
graph metric in the Cayley graph of $S_n$ w.r.t. the generating set of
adjacent transpositions $(1,2), (2,3), \dots, (n,n-1)$. We will say
that two permutations $\pi$ and $\tau$ are {\em at distance $t$} if
$d(\pi,\tau)=t$. 
If $\lambda$ and $\mu$ are two diagrams, define their
distance to be 
\[
\Delta=\Delta(\lambda,\mu):=\frac{1}{2}\sum_{i=1}^n |\lambda_i - \mu_i|.
\]
Let $\pi$ and $\tau$ be any two permutations. We are interested in the
Lipschitz constant of this mapping, i.e., 
\[
L(n,t):=\max \frac{\Delta(\lambda(\pi),\lambda(\tau))}{d(\pi,\tau)}
\]
where the maximum is over all $\pi, \tau \in S_n$ with
$d(\pi,\tau)=t$. 

The choice of left-multiplication above is in fact without loss of
generality. By Proposition \ref{prop:inverse_RSK} the shape of a
permutation and its inverse under the RSK correspondence are the
same. Our results thus all 
follow immediately for right-multiplication since $\tau=(i,i+1) \circ \pi$ is
equivalent to $\pi^{-1} = \tau^{-1} \circ (i,i+1)$.

In general, although $d$ and $\Delta$ are natural metrics to study for
permutations and diagrams respectively, the same question can be asked
for other metrics. We discuss the extension of our results to other metrics on
permutations in Section~\ref{sec:conclusions}.


\section{Exact Bounds on the Lipschitz Constant for a Single Transposition.}
\label{sec:t=1}
In this section we will show upper bounds on the Lipschitz constant
when the number of transpositions $t=1$. We also give a construction
of a family of permutations which achieve this bound asymptotically.

\subsection{Upper Bounds}

The first step of the proof is to show that left-multiplying a
permutation by a transposition can result in only a bounded number of
cells being different in each row of the diagram.

\begin{proposition}
Let $\pi,\tau \in S_n$ and let $\lambda, \mu$ be the respective
diagrams. Suppose that $\tau=(i,i+1) \circ \pi$,
and $\pi_i < \pi_{i+1}$. Then,
\begin{align}\label{eq:running-sum-t1}
\forall \ 1 \leq j \leq n,  \ \ \ \  \sum_{i=1}^j \mu_i \leq
\sum_{i=1}^j \lambda_i \leq \sum_{i=1}^j \mu_i +1 
\end{align}
\label{prop:running-sum-t1}
\end{proposition}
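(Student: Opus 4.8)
The natural tool here is Greene's theorem (Theorem~\ref{thm:Greene}): the partial sums $\sum_{i=1}^j \lambda_i$ equal $\mu_j(\pi)$, the largest cardinality of a union of $j$ increasing subsequences of $\pi$. So it suffices to show that for every $j$,
\[
\mu_j(\pi) - 1 \;\le\; \mu_j(\tau) \;\le\; \mu_j(\pi),
\]
where $\tau=(i,i+1)\circ\pi$ and $\pi_i<\pi_{i+1}$ (so that $\tau$ is obtained from $\pi$ by swapping two values that were in increasing order at positions $i,i+1$, making them decreasing). Note I am conflating the permutation $\pi$ with its one-line word; an increasing subsequence of $\pi$ means positions $p_1<p_2<\cdots$ with $\pi_{p_1}<\pi_{p_2}<\cdots$.

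\textbf{Upper bound $\mu_j(\tau)\le\mu_j(\pi)$.} Take an optimal family of $j$ increasing subsequences in $\tau$ whose union has size $\mu_j(\tau)$; without loss of generality the subsequences are disjoint (a union of $j$ increasing subsequences can always be refined to a disjoint such union covering the same set, by a standard argument). I claim this same family, viewed in $\pi$, still consists of increasing subsequences. The only pairs of positions whose relative order of values differs between $\pi$ and $\tau$ are $\{i,i+1\}$: in $\tau$ these two values are in decreasing order, in $\pi$ they are increasing. Hence any increasing subsequence of $\tau$ that uses both positions $i$ and $i+1$ — impossible, since their values decrease in $\tau$. So each of our $j$ subsequences uses at most one of $\{i,i+1\}$, and therefore every consecutive pair of positions inside it is a pair on which $\pi$ and $\tau$ agree; thus it is increasing in $\pi$ too. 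This gives $j$ disjoint increasing subsequences of $\pi$ of total size $\mu_j(\tau)$, so $\mu_j(\pi)\ge\mu_j(\tau)$.

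\textbf{Lower bound $\mu_j(\tau)\ge\mu_j(\pi)-1$.} Now take an optimal disjoint family $C_1,\dots,C_j$ of increasing subsequences in $\pi$ with $|\bigcup C_k|=\mu_j(\pi)$. At most one $C_k$ contains both positions $i$ and $i+1$; if none does, the same family is increasing in $\tau$ (same agreement-on-pairs argument) and we are done with no loss. If some $C_k$ contains both $i,i+1$, they must be consecutive in $C_k$ (any position strictly between them would have to host a value between $\pi_i$ and $\pi_{i+1}$, but positions $i$ and $i+1$ are adjacent). Delete position $i$ from $C_k$: the resulting subsequence is still increasing in $\pi$ (removing an element of an increasing chain), and since it no longer uses both of $\{i,i+1\}$ it is increasing in $\tau$ as well. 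The other $C_{k'}$, $k'\ne k$, each use at most one of $\{i,i+1\}$ and hence remain increasing in $\tau$. This yields $j$ disjoint increasing subsequences in $\tau$ of total size $\mu_j(\pi)-1$, so $\mu_j(\tau)\ge\mu_j(\pi)-1$.

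Combining the two inequalities with Greene's theorem gives \eqref{eq:running-sum-t1}.

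\textbf{Anticipated obstacle.} The one point that needs care, rather than being purely routine, is the reduction to \emph{disjoint} families of increasing subsequences and the claim that an increasing subsequence of $\tau$ cannot use both positions $i$ and $i+1$ — i.e.\ correctly tracking which pairs of positions change their relative value-order under left-multiplication by the adjacent transposition $(i,i+1)$. Once one is careful that left-multiplying by $(i,i+1)$ swaps the \emph{values} $\pi_i,\pi_{i+1}$ (equivalently, swaps the entries in positions $\pi^{-1}(i),\pi^{-1}(i+1)$ of $\pi^{-1}$), the combinatorics of deleting a single cell from at most one chain is straightforward. The asymmetry in \eqref{eq:running-sum-t1} (a $+1$ on the right but not the left) is exactly the asymmetry between "$\pi$ has the increasing pair, $\tau$ the decreasing pair," so this is the expected shape of the bound.
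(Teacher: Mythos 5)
Your argument is correct and is essentially the paper's own proof: both directions go through Greene's theorem, with the $+1$ coming from deleting one element of the transposed pair, and the other inequality from the observation that no increasing subsequence of $\tau$ can contain both transposed elements, so the optimal family transfers back to $\pi$ unchanged. Your extra care about refining to disjoint subsequences is harmless but not needed (deleting one element of the pair lowers the union's cardinality by at most one even without disjointness), and your slight wobble about left- versus right-multiplication mirrors a looseness already present in the proposition's hypothesis $\pi_i<\pi_{i+1}$, with no effect on the argument since in either reading the transposed pair is adjacent (in position or in value) and the shape is invariant under inversion.
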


\begin{proof}
Suppose that 
the largest cardinality of the union of $j$ increasing subsequences
in $\pi$ is $\ell$. Suppose there is a subsequence which includes the
pair that is being transposed in $\pi$. By
deleting one of the elements of the pair we obtain a set of $j$
increasing subsequences of $\tau$ whose cardinality is at least
$\ell-1$. If there is no such subsequence, then the same $j$ subsequences
are also increasing in $\tau$. By Greene's Theorem \ref{thm:Greene}
this implies
$\sum_{i=1}^j \lambda_i \leq \sum_{i=1}^j \mu_i +1.$

For the lower bound, consider the largest cardinality of the 
union of $j$ increasing sequences in $\tau$. No
subsequence in this union can contain both of the elements involved in the
transposition. Since the pair involved in the
transposition have no other elements between them in both $\pi$ and $\tau$
the subsequences are also increasing in $\pi$. We conclude in the same way that
$\sum_{i=1}^j \mu_i \leq \sum_{i=1}^j \lambda_i.$
\end{proof}

Figure \ref{fig:tableaux-t1} will be useful in the following
discussion. It depicts the union of two diagrams $\lambda$  
and $\mu$, which is also a Young diagram. The symmetric 
difference consists of the cells marked by a dot.
The remaining set of cells of the diagram labeled $W$ is the intersection of
$\lambda$ and $\mu$ and this is a Young diagram as well.

\begin{corollary}\label{cor:one-cell-different} 
Let $\pi, \tau \in S_n$ where $d(\pi,\tau)=1$ and let
$\lambda,\mu$ be the corresponding diagrams. Then at most one cell in
each row and 
each column of the union of $\lambda$ and $\mu$ can be in the
symmetric difference.
\end{corollary}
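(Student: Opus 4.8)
The plan is to deduce Corollary~\ref{cor:one-cell-different} directly from Proposition~\ref{prop:running-sum-t1}, using the symmetry afforded by Proposition~\ref{prop:reverse_conjugate}. First I would reduce to the case $\tau = (i,i+1)\circ\pi$ with $\pi_i < \pi_{i+1}$; this is without loss of generality, since if instead $\pi_i > \pi_{i+1}$ we may swap the roles of $\pi$ and $\tau$, and the claim about the symmetric difference of $\lambda$ and $\mu$ is symmetric in the two diagrams. Under this assumption Proposition~\ref{prop:running-sum-t1} gives, for every $1 \le j \le n$, the chain of inequalities $\sum_{i=1}^j \mu_i \le \sum_{i=1}^j \lambda_i \le \sum_{i=1}^j \mu_i + 1$. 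I would restate this as: the partial-sum sequence of $\lambda$ dominates that of $\mu$ everywhere, and exceeds it by at most $1$.

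Next I would extract the row statement. Write $a_j := \sum_{i=1}^j(\lambda_i - \mu_i) \in \{0,1\}$ for each $j$. Then $\lambda_j - \mu_j = a_j - a_{j-1} \in \{-1,0,1\}$, so $\lambda$ and $\mu$ differ by at most one cell in each row — and in fact the row $j$ lies in the symmetric difference exactly when $a_j \ne a_{j-1}$. Since each $a_j \in \{0,1\}$, between any two rows where $a$ changes value there must be an intervening change, so the rows of the symmetric difference alternate between ``$\lambda$ has the extra cell'' ($a_{j-1}=0, a_j=1$) and ``$\mu$ has the extra cell'' ($a_{j-1}=1, a_j=0$); in particular at most one cell of the union's row $j$ is in the symmetric difference, which is the row half of the statement.

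For the column statement, I would apply the row argument to the reversed permutations. Let $\pi^R$ and $\tau^R$ be the reversals of $\pi$ and $\tau$; by Proposition~\ref{prop:reverse_conjugate} their diagrams are $\lambda'$ and $\mu'$. The key point is that $d(\pi^R,\tau^R) = 1$ as well — reversing the word turns the adjacent transposition $(i,i+1)$ acting on positions $i,i+1$ into the adjacent transposition $(n-i, n-i+1)$ acting on the reversed word, so $\tau^R$ is still obtained from $\pi^R$ by a single adjacent transposition (after possibly swapping which of the two is called $\pi^R$). Applying the already-proved row statement to $\lambda'$ and $\mu'$ shows at most one cell in each row of $\lambda' \cup \mu'$ lies in $\lambda' \,\triangle\, \mu'$; transposing back, at most one cell in each column of $\lambda \cup \mu$ lies in $\lambda \,\triangle\, \mu$.

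The only real subtlety — and the step I would be most careful about — is the bookkeeping in this last reduction: verifying that reversing a permutation conjugates an adjacent transposition to another adjacent transposition (so that $d$ is preserved and equals $1$), and checking that the ``$\pi_i < \pi_{i+1}$'' hypothesis can always be arranged by relabeling, rather than being an extra constraint one must track. Everything else is immediate from Greene's theorem via Proposition~\ref{prop:running-sum-t1}; the combinatorics of the $\{0,1\}$-valued partial-sum differences is the heart of why ``at most one cell per row'' holds, and this is genuinely a one-line observation once the proposition is in hand.
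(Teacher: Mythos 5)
Your proof is correct and follows essentially the same route as the paper: differencing the partial-sum inequality of Proposition~\ref{prop:running-sum-t1} at $j$ and $j-1$ gives $|\lambda_j-\mu_j|\le 1$ for rows, and passing to reversals via Proposition~\ref{prop:reverse_conjugate} (noting that reversal preserves adjacency, so $d(\pi^R,\tau^R)=1$) gives the column statement. Your $a_j\in\{0,1\}$ bookkeeping and the explicit check that one may assume $\pi_i<\pi_{i+1}$ by swapping the roles of $\pi$ and $\tau$ simply make explicit what the paper leaves implicit.
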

\begin{proof}
To see this for a row $r$, consider inequality
\eqref{eq:running-sum-t1} for $j=r$ and for $j=r-1$ and
take their difference. A similar argument applied to the reversed
permutations implies the claim for columns. (see
Proposition~\ref{prop:reverse_conjugate}). 
\end{proof}

\begin{theorem}
Let $\pi$ and $\tau$ be permutations in $\mathcal S_n$ with
respective Young diagrams $\lambda$ and $\mu$, and suppose that
$d(\pi,\tau)=1$. Then
\[
\Delta= \Delta(\lambda,\mu) \leq \sqrt{\frac n2}.
\] 
\label{thm:upperbound-t=1}
\end{theorem}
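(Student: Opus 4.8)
The plan is to show that the \emph{entire} symmetric difference of $\lambda$ and $\mu$ is supported on removable corners of the union diagram $U:=\lambda\cup\mu$, and then to exploit the fact that a Young diagram cannot have many removable corners relative to its size. First I would record the immediate consequences of Corollary~\ref{cor:one-cell-different}: in every row $i$ at most one cell lies in the symmetric difference, so $|\lambda_i-\mu_i|\le 1$, and likewise $|\lambda'_j-\mu'_j|\le 1$ for every column $j$. Hence, if row $i$ meets the symmetric difference, its unique symmetric-difference cell is the last cell of row $i$ of $U$, sitting at column $\max(\lambda_i,\mu_i)=U_i$; symmetrically this same cell is the last cell of its column of $U$. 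Therefore each of the $2\Delta$ cells of the symmetric difference is a removable corner of $U$ — it has no cell of $U$ immediately to its right and none immediately below it (cf.\ Figure~\ref{fig:tableaux-t1}).

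Next I would count these corners. List them as $(a_1,b_1),\dots,(a_{2\Delta},b_{2\Delta})$. Distinct removable corners of a Young diagram lie in distinct rows and in distinct columns (if $i<i'$ and both $(i,U_i)$ and $(i',U_{i'})$ are removable, then $U_i>U_{i'}$, since otherwise the rows between force $(i{+}1,U_i)\in U$). So we may order them with $a_1<a_2<\cdots<a_{2\Delta}$, which gives $a_t\ge t$ for all $t$. Since $(a_t,b_t)\in U$ we have $U'_{b_t}\ge a_t$, and the columns $b_t$ are pairwise distinct, so $|U|=\sum_j U'_j\ge\sum_{t=1}^{2\Delta}a_t\ge 1+2+\cdots+2\Delta=2\Delta^2+\Delta$.

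Finally I would relate $|U|$ back to $n$. Writing $W:=\lambda\cap\mu$, we have $|U|+|W|=2n$ and $|U|-|W|=|\lambda\triangle\mu|=2\Delta$, hence $|U|=n+\Delta$. Combining with the previous inequality, $n+\Delta\ge 2\Delta^2+\Delta$, i.e.\ $n\ge 2\Delta^2$, which is exactly $\Delta\le\sqrt{n/2}$.

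The one point that requires care — and essentially the whole content of the argument — is getting the constant right. It is crucial to apply the corner count to $U$, whose size is $n+\Delta$, using that all $2\Delta$ symmetric-difference cells are corners of $U$. One could instead observe that the $\Delta$ cells of $\lambda\setminus\mu$ are removable corners of $\lambda$ itself, but applying the count to $\lambda$ with only $\Delta$ corners yields merely $\Delta\le\sqrt{2n}$, worse by a factor of two; the interleaving of the two families of corners inside the single diagram $U$ is what saves the factor.
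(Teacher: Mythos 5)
Your proof is correct and gives the stated bound, but by a route that differs in bookkeeping from the paper's. The paper likewise starts from Corollary~\ref{cor:one-cell-different}, but then constructs an explicit injection from unordered pairs of symmetric-difference cells into the intersection $W=\lambda\cap\mu$: two such cells $(i,j),(i',j')$ necessarily have $i\ne i'$, $j\ne j'$, and $(\min(i,i'),\min(j,j'))\in W$, with injectivity coming from the fact that Corollary~\ref{cor:one-cell-different} pins down exactly one symmetric-difference cell per row and per column. That yields $\binom{2\Delta}{2}\le |W|=n-\Delta$, i.e.\ $2\Delta^2\le n$, in one line. You instead identify each symmetric-difference cell as a removable corner of the union $U=\lambda\cup\mu$, sort them by row, and bound $|U|=n+\Delta$ from below by summing column heights over the $2\Delta$ distinct columns containing these corners. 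The two arguments are algebraically equivalent: your inequality $|U|\ge 2\Delta^2+\Delta$ becomes $|W|\ge\binom{2\Delta}{2}$ after subtracting $|\lambda\triangle\mu|=2\Delta$ from both sides, and both rest on the same geometric picture, so this is a different presentation of the same count rather than a distinct method. The paper's injection is terser; your version makes the ``removable corner of $U$'' structure explicit, and your closing observation --- that one must count in $U$ rather than in $\lambda$ alone to avoid losing a factor of two --- is a genuine point that the paper does not spell out.
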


\begin{figure}[t]
\center   
\resizebox{7cm}{!}{\input{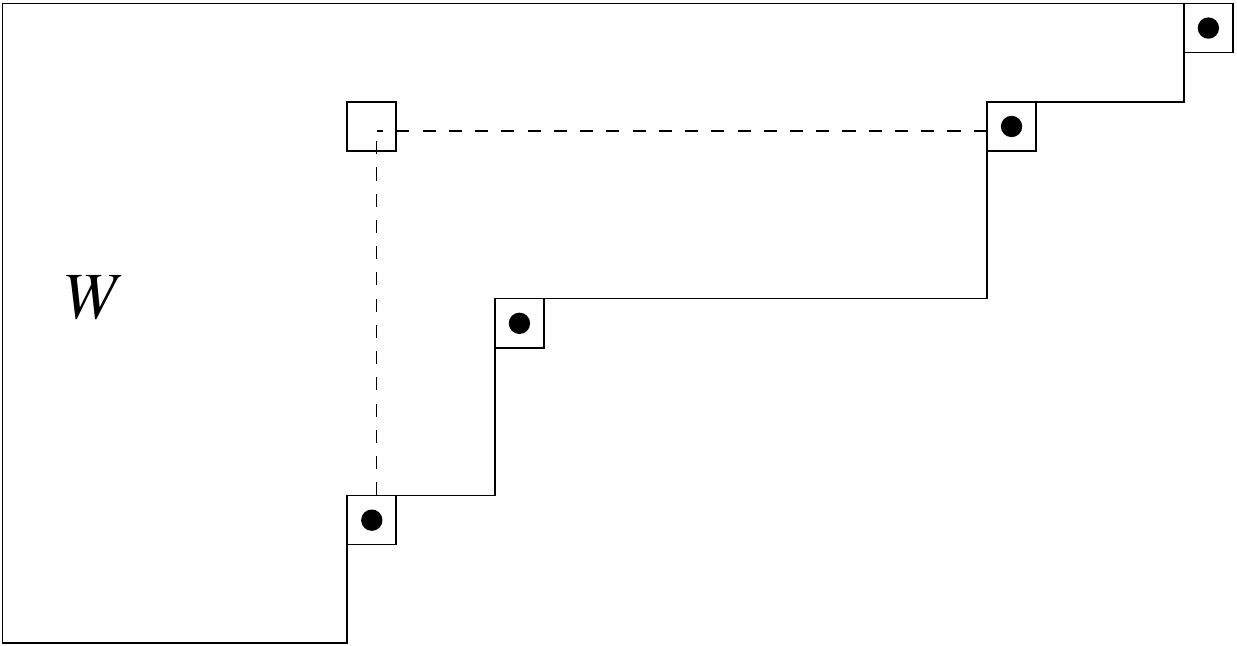_t}}
\caption{The union of $\lambda$ and $\mu$ with cells of the symmetric
  difference marked.} 
\label{fig:tableaux-t1}
\end{figure}

\begin{proof}
As shown in Figure \ref{fig:tableaux-t1},
let $(i,j)$ and $(i',j')$ be the coordinates of two distinct cells in
the symmetric 
difference. By Corollary \ref{cor:one-cell-different}, $i \neq i'$ and
$j \neq j'$, and 
$(\min(i,i'),\min(j.j')) \in W$. This gives a 1:1 map from
unordered pairs of cells in the symmetric difference into $W$.
Therefore,
\[
{2\Delta \choose 2} \leq n-\Delta 
\]
implying the required bound
\[
\Delta \leq \sqrt{\frac n2}. \qedhere
\]
\end{proof}

\subsection{Construction}
In this section we construct pairs of permutations in $S_n$
which differ by a single transposition whose corresponding
Young diagrams differ by at least  $(1-o(1))\sqrt{n/2}$
cells, matching the upper bound in 
Theorem \ref{thm:upperbound-easy} asymptotically. The following lemma
characterizes the shape of a permutation by the cardinalities of increasing
and decreasing subsequences.

\begin{lemma}\label{lem:decreasing-witness}Let $\pi \in S_n$ be
a permutation whose elements can be decomposed in the following two ways:
(i) into increasing
subsequences of cardinalities $\lambda_1,\lambda_2,\dots$, and (ii) into decreasing
subsequences of cardinalities $\lambda_1',\lambda_2',\dots$, where the partitions
$\lambda$ and $\lambda'$ are conjugate. Then $\lambda=\lambda(\pi)$.
\end{lemma}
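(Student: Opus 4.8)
The plan is to sandwich $\lambda(\pi)$ between $\lambda$ and $\lambda'$ using Greene's Theorem, and then use the fact that $\lambda$ and $\lambda'$ are conjugate to force equality. Concretely, write $\lambda(\pi) = (\nu_1, \nu_2, \dots)$. A decomposition of the elements of $\pi$ into increasing subsequences of sizes $\lambda_1, \lambda_2, \dots$ gives, for each $j$, a union of $j$ increasing subsequences of total size at least $\lambda_1 + \cdots + \lambda_j$ (take the $j$ largest pieces, or indeed any $j$ pieces — what we need is that \emph{some} collection of $j$ increasing subsequences has this size, so in fact $\mu_j \ge \lambda_1 + \cdots + \lambda_j$ where $\mu_j$ is Greene's quantity). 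By Theorem~\ref{thm:Greene}, $\nu_1 + \cdots + \nu_j = \mu_j \ge \lambda_1 + \cdots + \lambda_j$ for every $j$; that is, $\lambda \trianglelefteq \nu$ in dominance order.

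First I would run the same argument on the decreasing side. By Proposition~\ref{prop:reverse_conjugate}, $\lambda(\pi^R) = \nu'$, and a decreasing subsequence of $\pi$ is an increasing subsequence of $\pi^R$; so the decomposition (ii) into decreasing subsequences of sizes $\lambda_1', \lambda_2', \dots$ gives, via Greene's Theorem applied to $\pi^R$, the domination $\lambda' \trianglelefteq \nu'$, i.e. for every $j$, $\lambda_1' + \cdots + \lambda_j' \le \nu_1' + \cdots + \nu_j'$. Now I would invoke the standard fact that conjugation reverses dominance order: $\lambda' \trianglelefteq \nu'$ is equivalent to $\nu \trianglelefteq \lambda$. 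Combining with $\lambda \trianglelefteq \nu$ from the previous paragraph yields $\lambda = \nu = \lambda(\pi)$, as desired. (Alternatively, one can avoid quoting the order-reversal lemma: from $\lambda \trianglelefteq \nu$ and $|\lambda| = |\nu| = n$, if the two partitions were unequal there would be a first index $j$ with $\nu_1 + \cdots + \nu_j > \lambda_1 + \cdots + \lambda_j$; a short bookkeeping argument on partial sums of the conjugates then contradicts $\lambda' \trianglelefteq \nu'$. I would spell this out only if the reviewers object to citing the folklore lemma.)

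There is one point that deserves care: passing from "the elements decompose into increasing subsequences of sizes $\lambda_1, \lambda_2, \dots$" to "$\mu_j \ge \lambda_1 + \cdots + \lambda_j$" requires that the subsequences be \emph{disjoint} as sets of positions (so that the union of any $j$ of them genuinely has size equal to the sum of their lengths), which is exactly what "decomposed" should mean; I would state this explicitly at the start of the proof so the inequality is unambiguous. Everything else is routine: Greene's Theorem is quoted verbatim, Proposition~\ref{prop:reverse_conjugate} handles the decreasing-to-increasing translation, and the conjugation/dominance interplay is classical.

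I expect the only real obstacle to be purely expository — making sure the two dominance inequalities are set up with the correct direction and that the hypothesis "$\lambda$ and $\lambda'$ are conjugate" is used precisely where the order-reversal step needs it. No genuinely hard estimate or construction is involved; the lemma is essentially a packaging of Greene's Theorem together with its reversal-conjugate companion.
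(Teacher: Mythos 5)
Your proof is correct, but it takes a different route from the paper for the key upper bound. The paper's proof applies Greene's Theorem~\ref{thm:Greene} only once, to $\pi$ itself: the increasing decomposition gives $\sum_{i\le r}\lambda_i$ as a lower bound on the largest union of $r$ increasing subsequences, and the matching upper bound is obtained directly by counting intersections with the decreasing decomposition --- each increasing subsequence $s_i$ meets each decreasing subsequence $d_j$ in at most one element, so $\sum_{i\le r}|s_i| \le \sum_j \min\{\lambda_j',r\} = \sum_{i\le r}\lambda_i$, the last step being the standard identity for conjugate partitions. You instead sandwich $\nu=\lambda(\pi)$ in dominance order: Greene on $\pi$ gives $\sum_{i\le j}\lambda_i \le \sum_{i\le j}\nu_i$, Greene on $\pi^R$ together with Proposition~\ref{prop:reverse_conjugate} gives $\sum_{i\le j}\lambda_i' \le \sum_{i\le j}\nu_i'$, and the classical fact that conjugation reverses dominance (for partitions of the same $n$) then forces $\nu=\lambda$. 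Both arguments are sound; the paper's is self-contained and shorter, needing neither Proposition~\ref{prop:reverse_conjugate} nor any order-theoretic lemma, while yours makes the sandwich structure explicit at the cost of importing the order-reversal fact, whose usual proof is essentially the same intersection/conjugation count the paper performs inline. Your cautionary remark about disjointness of the subsequences is well taken and is indeed implicit in the word ``decomposed'' as the paper uses it.
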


\begin{proof}
By Greene's Theorem \ref{thm:Greene} it suffices to show that for
each $r$, the largest cardinality of the
union of $r$ increasing sequences in $\pi$ is $\sum_{i \leq r}\lambda_i$.
By assumption we know it is at least this number and we need to show the
opposite inequality. Namely, that if
$s_1,\dots,s_j$ is a collection of disjoint increasing sequences in $\pi$,
$\sum_{i=1}^r|s_i| \le \sum_{i \leq r}\lambda_i$.

By assumption, there is a decomposition $d_1,d_2\dots$ of $\pi$
into disjoint decreasing subsequences
of cardinalities $\lambda_1',\lambda_2',\dots$. But
each $s_i$ and $d_j$ can have at most one element in common, so that
\[
\sum_{i=1}^r|s_i| = \sum_{r \ge i \ge 1,~j} |s_i \cap d_j|
\leq \sum_j \min\{|d_j|,r\} = \sum_j \min\{\lambda_j',r\} = \sum_{i=1}^r \lambda_i
\]
where the last equality follows because the partitions
$\lambda_1,\lambda_2,\dots$ and $\lambda_1',\lambda_2',\dots$ are
conjugate. 
\end{proof}

\begin{theorem}\label{thm:lower-bound-one-transp}
For every $n$ there are permutations $\pi,\tau \in S_n$ with
$d(\pi,\tau)=1$ and respective shapes $\lambda,\mu$ 
such that $\Delta(\lambda,\mu) \geq (1-o(1))\sqrt{n/2}$. 
\end{theorem}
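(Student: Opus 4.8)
The plan is to reverse-engineer a permutation from a near-square union diagram using Lemma~\ref{lem:decreasing-witness}. Specifically, I want to exhibit a single permutation $\pi$ whose shape $\lambda$ is a staircase-like diagram, and a transposition that, when applied, shaves exactly one cell off each of the roughly $\sqrt{2n}$ "boundary" rows identified in the proof of Theorem~\ref{thm:upperbound-t=1}, so that the symmetric difference realizes the extremal configuration there. Concretely, take $m \approx \sqrt{2n}$ and let $\lambda$ be (a slight truncation of) the staircase partition $(m-1, m-2, \dots, 1)$, padded or adjusted so that $|\lambda| = n$; the complementary staircase gives the conjugate decomposition. The target $\mu$ should be obtained from $\lambda$ by removing one cell from each row (down to the diagonal), i.e.\ $\mu = (m-2, m-3, \dots, 1, 0)$ together with a compensating adjustment, so that $\Delta(\lambda,\mu)$ is of order $\sqrt{n/2}$.

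First I would pin down the exact diagrams and their sizes: choose $\lambda$ to be the staircase $\delta_m=(m-1,m-2,\dots,1)$, whose size is $\binom m2$, pick $m$ so that $\binom m2 \le n < \binom{m+1}2$, and absorb the deficit $n-\binom m2 < m$ into a single extra partial column or a lengthened first row — I will need to check this absorption does not destroy the structure I exploit. Then I would write down an explicit $\pi\in S_n$ realizing $\lambda$: decompose $[n]$ into $m-1$ increasing runs of the prescribed lengths and simultaneously into decreasing runs of the conjugate lengths, which is the standard "anti-diagonal reading" of the staircase Young tableau; Lemma~\ref{lem:decreasing-witness} then certifies $\lambda(\pi)=\lambda$. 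Next I would identify two adjacent positions $i,i+1$ in $\pi$ with $\pi_i<\pi_{i+1}$ whose swap produces $\tau$ with $\lambda(\tau)=\mu$ — again verified via Lemma~\ref{lem:decreasing-witness} applied to $\tau$, after exhibiting increasing/decreasing decompositions of $\tau$ of shapes $\mu$ and $\mu'$. Finally, I would compute $\Delta(\lambda,\mu)=\frac12\sum|\lambda_i-\mu_i|$, which for the "one cell per row" difference over $\Theta(m)$ rows is $\Theta(m)=\Theta(\sqrt n)$, and track the constant carefully to get exactly $(1-o(1))\sqrt{n/2}$, using $m\sim\sqrt{2n}$ and noting the symmetric-difference count is $2\Delta \sim m \sim \sqrt{2n}$.

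The main obstacle I anticipate is the simultaneous control of \emph{both} the increasing and the decreasing decompositions after the transposition: it is easy to find a swap that changes the increasing structure the right way, but Lemma~\ref{lem:decreasing-witness} demands a matching conjugate decreasing decomposition, and the swap must interact cleanly with both a chosen increasing run and a chosen decreasing run (removing or shifting exactly one shared element). I expect the cleanest route is to arrange the swapped pair to be the two ``corner'' elements sitting at the turning point of the staircase reading word, so that deleting one of them from an increasing run and re-inserting it reroutes exactly one cell in each of the affected rows, while the decreasing runs reorganize dually by the conjugation symmetry (Proposition~\ref{prop:reverse_conjugate}). A secondary nuisance is the padding needed to make $|\lambda|=n$ exactly: I would handle the $o(1)$ slack by allowing the extremal configuration to lose at most $O(1)$ cells of the symmetric difference near the boundary, which is absorbed in the $(1-o(1))$ factor, so I do not need the construction to be exactly tight for every $n$, only asymptotically. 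Verifying the two applications of Lemma~\ref{lem:decreasing-witness} is routine once the decompositions are written down explicitly, so the real work is choosing $\pi$ and the swap so that these decompositions are transparent.
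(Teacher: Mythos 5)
The proposal shares the paper's high-level template (use Lemma~\ref{lem:decreasing-witness} twice to certify $\lambda(\pi)$ and $\lambda(\tau)$), but the specific diagram you chose cannot work, and this is not a detail you can patch afterwards. If $\lambda$ is the pure staircase $\delta_m=(m-1,m-2,\dots,1)$ and you want a partner partition $\mu$ of the same total size whose symmetric difference with $\lambda$ has at most one cell per row and per column (which Corollary~\ref{cor:one-cell-different} forces at distance $1$), then the row differences $\mu_i-\lambda_i$ must alternate $+1,-1,+1,-1,\dots$. At every index $i$ where the sign pattern goes $(-1,+1)$, monotonicity of $\mu$ requires $\lambda_i\ge\lambda_{i+1}+2$; but in $\delta_m$ every consecutive gap is exactly $1$, so this fails in every other row. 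In other words $\delta_m$ admits no partner $\mu$ of the same area whose symmetric difference with $\delta_m$ is a size-$\Theta(m)$ antichain, so the extremal configuration of Theorem~\ref{thm:upperbound-t=1} is simply unreachable from a staircase. The ``compensating adjustment'' you invoke either violates the one-cell-per-row/column constraint or collapses $\Delta$ to $O(1)$; for instance, deleting the last row of $\delta_m$ and adding its cell back to row $1$ keeps the area but gives $\Delta=1$.

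This is exactly why the paper's construction uses a \emph{doubled} staircase $\lambda=(k+1,k-1,k-1,k-3,k-3,\dots,2,2)$ with conjugate partner $\mu=(k,k,k-2,k-2,\dots,1,1)$: consecutive gaps of $\lambda$ alternate $2,0,2,0,\dots$, which is precisely what the monotonicity constraint above demands, and the two shapes then differ by exactly one cell in every row and every column. Beyond this, the claim that a single adjacent transposition realizes the passage from $\lambda$ to $\mu$ is the actual content of the theorem, and you have deferred it entirely (``the real work is choosing $\pi$ and the swap so that these decompositions are transparent''). That work --- exhibiting $\pi$ and $\tau=(n/2,n/2+1)\circ\pi$ together with explicit increasing decompositions of shapes $\lambda$ and $\mu$ and decreasing decompositions of the conjugate shapes, for both permutations simultaneously --- is the hard and nontrivial part of the proof, and none of it is present. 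So as written the proposal has two genuine gaps: a target partition that cannot be correct, and no construction of the permutation/transposition pair.
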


\begin{proof}
Our proof says, in fact, a little more than what is stated.
Namely for $n=(k+1)^2/2$ with
$k$ an odd integer, we will construct two permutations
$\pi$ and $\tau$ of shapes
$\lambda=(k+1,k-1,k-1,\dots,2,2)$ and $\mu=(k,k,k-2,k-2,\dots,1,1)$
which differ by exactly one cell in each row and column, giving
$\Delta=\sqrt{n/2}$. 
Thus it can be verified that together with Theorem~\ref{thm:upperbound-t=1}
this gives a complete answer to our question for $n$
of this form. For other values of $n$ we get the result by
padding this basic construction.
In the discussion that follows we decompose these permutations
into monotone subsequences. The decompositions we exhibit are not
necessarily unique, but for our purpose any decomposition suffices. 

The construction can, perhaps, be best understood by observing
alongside with the general discussion
a concrete special case. So we intersperse our general constructions
with an illustration that shows how things work
for $n=18$ ($k=5$). We start by dividing
the elements of $[n]$ into three
categories according to their magnitude. The ``small''
elements are those in the interval $[1,n/2-\frac{k+1}{2}]$. The next $k+1$
elements, i.e., interval $[n/2-\frac{k-1}{2},n/2+\frac{k+1}{2}]$ are
``intermediate'' and members of the interval
$[n/2+\frac{k+3}{2},n]$ are ``big''.

We further subdivide the big elements (in order) into blocks
$b_1,\dots,b_{(k-1)/2}$. 
The small elements are split (in order) into blocks $s_{(k-1)/2}, \ldots,s_1$.
Both $s_i$ and $b_i$ have cardinality $2i$.

\begin{align*}
\begin{array}{ccc}
s_2 \ \ \ \ \ \ \ \ s_1 & \ \ \ \ \ \ \ \ & b_1
\ \ \ \ \ \  \ \ \ \ \ b_2 \\ 
\underbrace{(1 \  2 \ 3 \ 4) \ (5 \ 6)} & \underbrace{7 \ 8 \ 9
    \ 10 \ 11 \ 12} & \underbrace{ (13 \ 14) \ (15 \ 16 \ 17 \ 18)} \\ 
 \mathrm{small} & \mathrm{intermediate} &
    \mathrm{big} \\
\end{array}
\end{align*}

The permutation $\pi$ is constructed by spreading out the intermediate
elements with $n/2$ and $n/2+1$ remaining fixed points (see
below). The blocks of 
big elements are then inserted in the order $b_{(k-1)/2},\dots,b_1$
in the spaces 
between the smaller intermediate elements while the blocks of small
elements are inserted in the order $s_1,\dots,s_{(k-1)/2}$ in the
spaces between the larger intermediate elements. To obtain $\tau$ we apply the
transposition $(n/2,n/2+1)$ to $\pi$. The
permutations are defined in this manner with a view to decomposing
them into increasing and decreasing sequences of desired cardinalities.

\begin{align*}
\pi = 7 (15 \  16 \ 17 \ 18) \ 8 \ (13 \ 14) \ \underline{9
    \ 10} \ (5 \ 6) \ 11 \ (1 \ 2 \ 3 \ 4) \ 12 \\
\tau = 7 (15 \  16 \ 17 \ 18) \ 8 \ (13 \ 14) \ \underline{10
    \ 9} \ (5 \ 6) \ 11 \ (1 \ 2 \ 3 \ 4) \ 12 \\
\end{align*}

From the construction we claim that $\pi$ and $\tau$ can be
decomposed into a disjoint union of increasing subsequences of cardinalities
$(k+1,k-1,k-1,\dots,2,2)$ and $(k,k,k-2,k-2,\dots,1,1)$
respectively. For $\pi$ the increasing sequences consist of  
(i) The intermediate elements, which in our example is $7,8,9,10,11,12$,
(ii) The blocks of small elements, i.e., $1,2,3,4$ and $5,6$ and (iii) The
blocks of big elements, i.e., $15,16,17,18$ and $ 13,14$.

The permutation $\tau$ can be decomposed into the increasing
subsequences of the following three types:
(i) An intermediate element and the block of big
elements following it, which in the example are $7,15,16,17,18$ and $8,13,14$,
(ii) A block of small elements and the following 
intermediate element, i.e., $5,6,11$ and $1,2,3,4,12$ and (iii) The two
subsequences of length one consisting of one of the two middle
intermediate elements, i.e. $10$ and $9$.

The proof that $\pi$ and $\tau$ have the shapes
$\lambda=(k+1,k-1,k-1,\dots,2,2)$ and 
$\mu=(k,k,k-2,k-2,\dots,1,1)$ respectively uses Lemma
\ref{lem:decreasing-witness}. It is enough to decompose $\pi$ and
$\tau$ into a union of decreasing sequences whose
cardinalities are given by the respective conjugate sequences. 
Note that as it happens, the shapes $\lambda$ and $\mu$ are conjugates.

We assign the elements of $\pi$ to decreasing subsequences
$d_1,\dots,d_{k+1}$ of cardinalities $k,k,\dots,1,1$.
as follows. Since the $d_i$ are subsequences, elements in them appear in the
same order as in the permutation. Secondly, the assignment is made so
that each subsequence has exactly one of the intermediate elements,
and it appears after any of the big elements and before any of the
small elements. (There is more than one way to do this.) We first see how this is done in the example. 
 
We first construct $d_1$ and $d_2$ which are both decreasing sequences 
of length $5$.  The largest elements in the blocks $b_i$, the
largest elements in the blocks $s_i$ and one of
the middle intermediate elements are assigned to $d_1$. Then we choose
$d_2$ similarly from among the remaining elements.

\begin{align*}
d_1 \ : \  7 (15 \  16 \ 17 \ \framebox{18}) \ 8 \ (13
\ \framebox{14}) \ \framebox{9} \ 10 \ (5 \ \framebox{6}) \ 11 \ (1 \ 2 \ 3
\ \framebox{4}) \ 12 \\
d_2 \ : \  7 (15 \  16 \ \framebox{17} \ \cancel{18}) \ 8 \ (\framebox{13}
\ \cancel{14}) \ \cancel{9} \ \framebox{10} \ (\framebox{5}
\ \cancel{6}) \ 11 \ (1 \ 2 \ \framebox{3} \ \cancel{4}) \ 12 
\end{align*}

The remaining elements can be seen to have the same structure
recursively (the remaining elements appear in the same relative order as would
the elements of the permutation for $n=8$), where the brackets
indicate blocks of big and small elements as before.
\begin{align*}
7 (15 \ 16) \ 8 \ 11 \ (1 \ 2 ) \ 12 
\end{align*}
To assign elements to $d_3$ and $d_4$, we want to continue with the
strategy of choosing the largest elements that remain in the
blocks. Note that since the big 
elements 13 and 14 have been assigned, there are no big elements that follow
the element 8, and it now becomes ``available''.
Thus $d_3$ and $d_4$ are constructed by assigning the largest elements
that remain in the small and big blocks and one of the remaining intermediate
elements in the middle of the blocks. 
\begin{align*}
d_3 \ : \  7 (15 \ \  \framebox{16} \ \cancel{17} \ \cancel{18}\ )
\ \framebox{8} \ (\cancel{13} 
\ \cancel{14}) \ \cancel{9} \ \cancel{10} \ (\cancel{5}
\ \cancel{6}) \ \ 11 \ \ (\ 1 \  \ \framebox{2} \ \cancel{3} \ \cancel{4})
\ 12  \\
d_4 \ : \  7 (\framebox{15} \  \ \cancel{16} \ \  \cancel{17} \
 \cancel{18}) \ \ \cancel{8} \ \ (\cancel{13} 
\ \cancel{14}) \ \cancel{9} \ \cancel{10} \ (\cancel{5}
\ \cancel{6}) \ \framebox{11} \ (\framebox{1} \ \ \cancel{2} \ \ \cancel{3}
\ \cancel{4}) \ 12
\end{align*}
Proceeding the same way, we obtain the subsequences: $d_1=18,14,9,6,4$,
$d_2=17,13,10,5,3$, $d_3=16,8,2$, $d_4=15,11,1$, $d_5=7$, $d_6=12$. In
general, the assignment is done as follows.
\begin{itemize}
\item The $i$-th largest element in each block of big elements, is assigned to
  the subsequence $d_i$.
\item The $i$-th largest element in each block of small elements, is assigned to
  the subsequence $d_i$.
\item For the intermediate elements, assign the lower $(k+1)/2$
  elements to the subsequences $d_{k},d_{k-2},\dots,d_1$ (in that
  order) and the top 
  $(k+1)/2$ elements to $d_2,\dots,d_{k-1},d_{k+1}$ (in that order).
\end{itemize}
Clearly, this is a decomposition of $[n]$ with
exactly $k-2\lfloor (i-1)/2 \rfloor$ elements in $d_i$.
It remains to show that each $d_i$
is a decreasing subsequence. By the construction of the
permutation, the big and small elements in $d_i$ form a decreasing
subsequence since each of them is from a different block. Secondly, the
intermediate element
in $d_i$ appears after all the big elements and before any of the the
small ones.  

Similarly, for the permutation $\tau$, we define the decreasing
subsequences $f_1,\dots,f_k$ of
cardinalities $k+1,k-1,k-1,\dots,2,2$, where $|f_i| = k+1-2\lfloor i/2\rfloor$.
As before, the assignment is made so that each sequence but for one
(which has the two middle intermediate elements) has at most
one intermediate element, and at most one element from each of the
small and the big blocks.
In our example, we construct $f_1$, a subsequence of length $6$, by
taking the largest element from each block and the two middle
intermediate elements.

\begin{align*}
f_1 \ : \  7 (15 \  16 \ 17 \ \framebox{18}) \ 8 \ (13
\ \framebox{14}) \ \framebox{10} \ \framebox{9} \ (5 \ \framebox{6})
\ 11 \ (1 \ 2 \ 3 
\ \framebox{4}) \ 12
\end{align*}

Next, we choose $f_2$ and $f_3$ which are both subsequences of length
$4$. At this point, we cannot continue to follow the
strategy of assigning the largest elements from each block to $f_2$
(by choosing $17,13,5,3$) as in the next step
we would fail to construct $f_3$ of length $4$. Instead, note that
when only one element remains in a block of small elements, the
intermediate element which follows that block has not yet been
assigned and it does not follow any other small elements. Thus the
strategy for $f_2$ is to assign to it the largest elements 
from all blocks except from $s_1$ in which only one element remains,
and to assign the intermediate element following $s_1$ to $f_2$. To
construct $f_3$, we take the largest remaining elements in all
the blocks, and the intermediate element that precedes the block of
big elements whose smallest element was assigned to
$f_2$. Diagrammatically, we have:

 \begin{align*}
f_2 \ : \  7 (15 \  \ 16 \ \ \framebox{17} \ \cancel{18}) \ \ 8 \ \ (\framebox{13}
\ \cancel{14}) \ \cancel{10} \ \cancel{9} \ (\ 5 \ \ \cancel{6})
\ \framebox{11} \ (1 \ \ 2 \ \  \framebox{3} 
\ \cancel{4}) \ 12\\
f_3 \ : \  7 (15 \  \framebox{16} \ \ \cancel{17} \ \ \cancel{18}) \ \framebox{8}
\ (\ \cancel{13} \ \ \cancel{14}) \ \cancel{10} \ \cancel{9} \ (\framebox{5} \ \cancel{6})
\ \ \cancel{11} \ \ (1 \ \framebox{2} \ \ \cancel{3} \
\ \cancel{4}) \ 12
\end{align*}
Repeating the same arguments for the remaining elements, we obtain the following subsequences for the example: $f_1=18,14,10,9,6,4$,
$f_2=17,13,11,3$, $f_3=16,8,5,2$, $f_4=15,12$, $f_5=7,1$. In general,
the subsequences can be defined as follows.
\begin{itemize}
\item The $i$-th largest element in each block of big elements is assigned
  to $f_i$.
\item The smallest element in a block of small elements $s_j$ is assigned
  to $f_{2j+1}$. Among the remaining elements, the $i$-th largest
  element goes to $f_i$.
\item The lower $(k-1)/2$ of the intermediate
  elements go to $f_{k},f_{k-2},\dots,f_{3}$ (in that
  order). The top 
  $(k-1)/2$ elements to $f_2,\dots,f_{k-1}$ (in that order).
  The two middle intermediate elements are in $f_1$.
\end{itemize}
As before, the $f_i$ constitute a decomposition and they have the
appropriate sizes. By construction, the big and
small elements in any subsequence $f_i$ form a decreasing
subsequence. Lastly, for $i \neq 1$ there is at most one
intermediate element in $f_i$ and if one exists, it appears after
all the big elements and before all the small ones. For
$i=1$, the two intermediate elements appear consecutively in
decreasing order, after all big elements and before all small ones.
Thus, $\pi$ and $\tau$ have the claimed shapes and
it follows that
\[
\Delta = \frac{k+1}{2}  = \sqrt\frac n2.
\]

For $n$ not of the form $(k+1)^2/2$, we construct two permutations
as follows. Let $n_0 < n$ be the largest integer such that $n_0 =
(k+1)^2/2$ for odd $k$. The first $n_0$ elements of $\pi$ and $\tau$
are set according to the construction above on $n_0$ elements. The
last $n-n_0$ elements of both $\pi$ and $\tau$ are $n_0+1,\dots,
n$. Then, we have that 
\[
\Delta = \sqrt \frac{ n_0}{2} \geq (1-o(1))\sqrt\frac n2. \qedhere
\]
\end{proof}

We have carried out computer simulations and found other pairs
of permutations for which the bound holds with equality. Several mysteries
remain here, a few of which we mention in Section~\ref{sec:conclusions}.

\section{Bounds on the Lipschitz Constant for $t>1$}
\label{sec:upper-bounds-general-t}

In this section we show bounds on the Lipschitz constant for $t>1$.
Extending the arguments from the previous section for both
the upper and lower bound gives bounds that are tight up to constant
factors for $t=O(1)$. In the latter 
half of this section we give a more complicated argument that
yields an improved upper bound for general $t$.

\subsection{A construction for permutations at linear distance $t$.}
The construction for the case of one transposition can be extended to
the case of more than one transposition as follows. 

\begin{theorem}
\label{thm:construction:t>1}
Let $t \le n/2$. For every $n$ there are permutations $\pi,\tau \in S_n$ with
$d(\pi,\tau)=t$ and respective shapes $\lambda,\mu$ 
such that $\Delta(\lambda,\mu) \geq (1-\sqrt{t/2n})\sqrt{nt/2}$. 
\end{theorem}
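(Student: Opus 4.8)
The plan is to mimic the single‑transposition construction from Theorem~\ref{thm:lower-bound-one-transp}, but to ``stack'' $t$ copies of the middle perturbation instead of one. Recall that in the $t=1$ case the entire effect came from a single transposed pair $(n/2,n/2+1)$ sitting in the middle, surrounded by nested blocks of small and big elements whose sizes grew linearly. To handle general $t$, I would enlarge the band of intermediate elements so that it contains $2t$ (rather than $2$) elements that will be moved, and I would nest blocks $s_1,\dots,s_m$ and $b_1,\dots,b_m$ of big and small elements around them with $m=\Theta(\sqrt{nt})$; the block sizes should again grow roughly linearly, scaled so that the total number of elements used is $n$. Concretely, choosing the blocks $b_i$, $s_i$ to have size proportional to $i$ uses $\Theta(m^2)$ elements in the blocks, and the intermediate band uses $\Theta(m)$; balancing gives $m\approx\sqrt{n/(\text{const})}$, and then the shape change is $\Theta(m)$ per unit of ``middle disorder,'' so $t$ transpositions acting on the middle band produce $\Delta=\Omega(\sqrt{nt})$.

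The two permutations $\pi$ and $\tau$ are then defined exactly as before: $\pi$ spreads the intermediate elements out, interleaving the big blocks $b_m,\dots,b_1$ among the lower intermediate elements and the small blocks $s_1,\dots,s_m$ among the upper intermediate elements, with the central $2t$ intermediate elements left in increasing order; $\tau$ is obtained from $\pi$ by reversing those central $2t$ elements, which costs exactly $\binom{2t}{2}/\!\sim$ — more precisely $d(\pi,\tau)=t$ if we reverse them in a way that uses exactly $t$ adjacent transpositions, e.g.\ by applying $t$ disjoint adjacent transpositions to $t$ disjoint consecutive pairs. (I would in fact use $t$ \emph{disjoint} transpositions on $t$ consecutive pairs in the middle, so that $d(\pi,\tau)=t$ is immediate and each transposed pair behaves like the single pair in the $t=1$ argument.) As in the base case, I would exhibit explicit decompositions of $\pi$ and $\tau$ into increasing subsequences, and into decreasing subsequences whose cardinalities are the conjugate partitions, and invoke Lemma~\ref{lem:decreasing-witness} to read off $\lambda=\lambda(\pi)$ and $\mu=\lambda(\tau)$. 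The increasing decomposition of $\pi$ uses the intermediate band as one long increasing run together with each $s_i$ and each $b_i$ as increasing runs; for $\tau$ each transposed pair gets merged with an adjacent big block (or a small block with the following intermediate element), exactly as in the $t=1$ picture, shifting the row lengths and producing the claimed $\Delta$.

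The main obstacle I anticipate is bookkeeping the \emph{decreasing} decompositions cleanly for general $t$, and in particular getting the error term $(1-\sqrt{t/2n})$ exactly right. In the $t=1$ proof the decreasing subsequences $d_i$ and $f_i$ were built by a somewhat delicate greedy rule (taking the $i$‑th largest element of each block, with special handling when a block is nearly exhausted and when the ``availability'' of an intermediate element changes); with a band of $2t$ central elements, the analogous rule has to distribute $t$ of them into the short subsequences, and one must check that each resulting subsequence is genuinely decreasing (big elements, then the at most one intermediate element, then small elements, with the single special subsequence carrying the $2t$ central elements in decreasing order). The shortfall $\sqrt{t/2n}$ should come precisely from the fact that the $2t$ central intermediate elements occupy ``rows'' that do not fully participate in the nesting, so $\Delta$ is $m$ minus a correction of order $t/m$; after setting the block sizes to sum exactly to $n$ and simplifying, this yields $\Delta\ge(1-\sqrt{t/2n})\sqrt{nt/2}$. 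I would present the $t=1$ case as the $t=1$ specialization to reassure the reader, then give the general block sizes, the two decompositions, and a short computation of $\Delta$; for $n$ not of the required arithmetic form I would pad with fixed points at the end exactly as in Theorem~\ref{thm:lower-bound-one-transp}.
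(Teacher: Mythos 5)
Your plan is genuinely different from the paper's, and it has a fatal gap: the central quantitative claim that the $t$ disjoint transpositions in the middle of one widened gadget each contribute ``$\Theta(m)$'' to $\Delta$ is neither consistent nor true. (It is internally inconsistent because you set $m=\Theta(\sqrt{nt})$ and then $m\approx\sqrt{n}$, and ``$\Theta(m)$ per transposition times $t$'' would give $\Delta=\Theta(t\sqrt n)$, violating Theorem~\ref{thm:t-3/4} and even $\Delta\le n$ for $t=\Theta(n)$.) More importantly, the construction itself fails. The single-swap gain of $\sqrt{n/2}$ in Theorem~\ref{thm:lower-bound-one-transp} comes from a critical balance: the intermediate band is \emph{exactly} long enough that breaking it in the middle forces the optimal Greene covers to reorganize in every row. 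Once you insert $2t$ extra central elements, that balance is gone: after your $t$ disjoint swaps, the band minus one element of each swapped pair is still the best first row, the $t$ leftover elements form one short increasing run, and no other row moves. Concretely, for $n=20$, $t=2$ your construction gives $\pi = 7\,(17\,18\,19\,20)\,8\,(15\,16)\,9\,10\,11\,12\,(5\,6)\,13\,(1\,2\,3\,4)\,14$ and $\tau$ with middle $10\,9\,12\,11$; using Lemma~\ref{lem:decreasing-witness} with the decompositions of sizes $(8,4,4,2,2)$ and $(5,5,3,3,1,1,1,1)$ for $\pi$, and $(6,4,4,2,2,2)$ and $(6,6,3,3,1,1)$ for $\tau$ (e.g.\ the increasing cover $\{7,8,10,12,13,14\},\{17,18,19,20\},\{1,2,3,4\},\{15,16\},\{5,6\},\{9,11\}$), one gets $\lambda=(8,4,4,2,2)$, $\mu=(6,4,4,2,2,2)$, hence $\Delta=2=t$, far below the required $(1-\sqrt{t/2n})\sqrt{nt/2}\approx 3.5$. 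In general your swaps cost the first row about $t$ cells and create a single new run of length $t$, so $\Delta=O(t)$, which is $o(\sqrt{nt})$ for all $t=o(n)$; no amount of care with the decreasing decompositions (the ``bookkeeping'' you defer) can fix this, because the shapes themselves are wrong.

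The paper's proof avoids this entirely by giving each transposition its own critically balanced gadget: set $k=\lfloor\sqrt{2n/t}-1\rfloor$, $m=(k+1)^2/2$, split the first $mt$ values into $t$ consecutive blocks of size $m$, apply the one-transposition construction inside each block (one adjacent transposition per block), and append the remaining $n-mt$ values in increasing order. Since the result is a direct sum, Greene's invariants add across blocks, so the shapes of $\pi$ and $\tau$ are the row-wise sums of the $t$ small shapes (plus $n-mt$ cells in the top row), giving $\Delta=t\sqrt{m/2}\ge(1-\sqrt{t/2n})\sqrt{nt/2}$. The idea you are missing is exactly this scaling: to make the per-transposition gain $\sqrt{m/2}$ additive, each transposition must sit in its own copy of the construction, which forces copies of size $m\approx n/t$ rather than one global gadget of size $n$ with all $t$ swaps in its middle.
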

\begin{proof}
Let $k = \lfloor \sqrt{2n/t} -1 \rfloor$, and $m=(k+1)^2/2$
so that $mt\le n$. Divide the first $mt$ elements of $[n]$ into $t$
blocks of length $m$ each. To construct the permutations, in each 
block we permute the 
elements as in the construction for one transposition, and then concatenate the
blocks with the remaining $n-mt$ elements following. Then, it is not
difficult to see that the RSK 
algorithm on this pair of permutations will result in a shape with $t$ of the
smaller Young diagrams corresponding to each block being pasted one after the
other, with an additional $n-mt$ boxes in the top row of each
diagram. Then, $\Delta=t \sqrt{m/2}  \geq
\sqrt{nt/2}(1-\sqrt{t/2n})$. Thus when $t=o(n)$, 
$\Delta \ge (1-o(1))\sqrt{nt/2}$. 
\end{proof}

\subsection{Upper Bounds}

We start with an easy observation:

\begin{theorem}
Let $\pi$ and $\tau$ be permutations in $\mathcal S_n$ such that
$d(\pi,\tau)=t$. Let $\lambda$ and $\mu$ be the respective Young diagrams. Then
\[
\Delta= \Delta(\lambda,\mu) \leq t\sqrt{\frac n2}.
\] 
\label{thm:upperbound-easy}
\end{theorem}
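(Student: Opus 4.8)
The plan is to derive this bound directly from the single-transposition case by a triangle inequality argument, exploiting that $d(\pi,\tau)=t$ means there is a path of length $t$ in the Cayley graph connecting $\pi$ to $\tau$.

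First I would fix a geodesic $\pi = \sigma_0, \sigma_1, \dots, \sigma_t = \tau$ in the Cayley graph of $S_n$ with respect to adjacent transpositions, so that consecutive permutations $\sigma_{s-1}$ and $\sigma_s$ satisfy $d(\sigma_{s-1},\sigma_s)=1$. Let $\lambda^{(s)}$ denote the Young diagram $\lambda(\sigma_s)$, so $\lambda^{(0)}=\lambda$ and $\lambda^{(t)}=\mu$. Since $\Delta$ is built from the $\ell_1$-type expression $\frac12\sum_i|\lambda_i-\mu_i|$, it is a genuine metric on diagrams (it is the restriction of a scaled $\ell_1$ norm to integer partition vectors), so the triangle inequality gives
\[
\Delta(\lambda,\mu) \;\leq\; \sum_{s=1}^{t} \Delta(\lambda^{(s-1)},\lambda^{(s)}).
\]

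Next I would bound each term $\Delta(\lambda^{(s-1)},\lambda^{(s)})$ using Theorem~\ref{thm:upperbound-t=1}: since $d(\sigma_{s-1},\sigma_s)=1$, that theorem yields $\Delta(\lambda^{(s-1)},\lambda^{(s)}) \leq \sqrt{n/2}$. Summing over the $t$ steps gives $\Delta(\lambda,\mu) \leq t\sqrt{n/2}$, which is exactly the claimed bound. The argument is essentially immediate once the geodesic decomposition is set up; the only thing to check with any care is that $\Delta$ really does satisfy the triangle inequality — but this is routine, since $|a-c| \le |a-b| + |b-c|$ termwise and one then sums over $i$ and halves.

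I do not expect a genuine obstacle here; this is the ``easy observation'' the authors flag, and its role is mainly to serve as a baseline that the later, more intricate $O(\sqrt{nt\ln t})$ upper bound improves upon for growing $t$. If anything, the only subtlety worth a remark is that applying Theorem~\ref{thm:upperbound-t=1} to each edge of the geodesic uses the bound $\sqrt{n/2}$ uniformly, with no attempt to exploit cancellation between successive steps — indeed it is precisely this lack of cancellation that makes the bound loose and motivates Section~\ref{sec:upper-bounds-general-t}'s sharper analysis.
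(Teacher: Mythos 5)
Your proposal is correct and follows exactly the paper's own argument: take a geodesic $\pi=\sigma_0,\dots,\sigma_t=\tau$, note that $\Delta$ is a metric on diagrams, and apply Theorem~\ref{thm:upperbound-t=1} to each consecutive pair before summing via the triangle inequality. No differences worth noting.
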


\begin{proof}
Since $d(\pi,\tau)=t$, there is a sequence of permutations 
$\pi=\sigma_0,\sigma_1, \dots,\sigma_t=\tau$ such that for each $0
\le i < t$, $\sigma_i$ and $\sigma_{i+1}$ differ by an adjacent
transposition.
The distance $\Delta(\cdot,\cdot)$ is a metric on diagrams and hence the
bound follows by the triangle inequality from Theorem
\ref{thm:upperbound-t=1}. 
\end{proof}
We do not see how to appropriately adapt the bijective argument of Theorem
\ref{thm:upperbound-t=1}. However, the following argument yields a near-optimal
bound.
\begin{theorem}
Let $\pi,\tau \in \mathcal S_n$ be such that $d(\pi,\tau)=t$. Let
$\lambda, \mu$ be the corresponding diagrams. Then
$$\Delta(\lambda,\mu) \leq O(\sqrt{n t \ln t}).$$ 
\label{thm:t-3/4}
\end{theorem}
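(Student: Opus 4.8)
The plan is to interpolate between $\pi$ and $\tau$ by a path of $t$ adjacent transpositions $\pi=\sigma_0,\sigma_1,\dots,\sigma_t=\tau$ as in Theorem~\ref{thm:upperbound-easy}, but to control the accumulated change in the diagram much more carefully than the crude triangle inequality allows. The naive bound loses a full $\sqrt{n/2}$ per step; the point must be that the diagram cannot change near its \emph{top} (large rows / small running sums) on every one of the $t$ steps, because Greene's Theorem ties the first few running sums $\sum_{i\le j}\lambda_i$ to lengths of unions of increasing subsequences, and these quantities are Lipschitz-$1$ under a single transposition (Proposition~\ref{prop:running-sum-t1}): each step moves each running sum by at most $1$. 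So a given running sum $S_j=\sum_{i\le j}\lambda_i$ can change by at most $t$ total over the whole path, regardless of how the intermediate diagrams wiggle. The strategy is to split the contribution to $\Delta(\lambda,\mu)$ into a ``top'' part, coming from the first few rows, and a ``bulk'' part, and bound each.

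Concretely, I would write $2\Delta(\lambda,\mu)=\sum_i|\lambda_i-\mu_i|$ and relate it to the differences of running sums $|S_j(\lambda)-S_j(\mu)|$ via summation by parts (or just note $|\lambda_i-\mu_i|\le |S_i(\lambda)-S_i(\mu)|+|S_{i-1}(\lambda)-S_{i-1}(\mu)|$). For a row index $j\le r$ (for a threshold $r$ to be chosen), Proposition~\ref{prop:running-sum-t1} gives $|S_j(\lambda)-S_j(\mu)|\le t$, so the top $r$ rows contribute $O(rt)$ to $2\Delta$. For the remaining rows, I would use a geometric fact about Young diagrams: a diagram of $n$ cells has at most $O(\sqrt{n})$ rows of length $\ge \sqrt n$, and more usefully, few rows that are long; combined with the symmetric-difference structure from Corollary~\ref{cor:one-cell-different} applied along the path (at most one cell per row changes per step, hence at most $t$ cells per row change total, and similarly at most $t$ per column), the ``bulk'' discrepancy is confined to a region that is both at most $t$ wide in every row beyond row $r$ and has total area controlled by $n$. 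The dual statement (apply the same reasoning to $\pi^R,\tau^R$ via Proposition~\ref{prop:reverse_conjugate}) lets me simultaneously bound how the diagram changes near the top and near the left, i.e. control both the first $r$ rows and the first $r$ columns, so the only unconstrained cells lie in an $r\times?$ corner complement; balancing gives the region a size like $n/r$ in the relevant direction, and choosing $r\asymp\sqrt{(n\ln t)/t}$ to balance $rt$ against the bulk term $n/r$ (up to the $\ln t$ that enters when one optimizes the number of ``long rows'' bound more carefully, e.g. via a dyadic decomposition over row-lengths) yields $\Delta=O(\sqrt{nt\ln t})$.

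More precisely, I expect the $\ln t$ factor to come from a dyadic argument over the path: partition $\{1,\dots,t\}$ into $\log t$ scales, or partition the rows into $\log t$ bands by length, and in each band use the running-sum Lipschitz bound at the appropriate resolution — the $j$-th running sum changes by at most $1$ per step, but a cell in row $j$ can only be ``born'' and ``die'' a bounded number of times, and summing the per-band contributions $\sum_{\text{bands}} (\text{width}_b)\cdot(\text{height}_b)$ subject to the area-type constraint $\sum_b \text{width}_b\cdot\text{height}_b\lesssim n$ and the Lipschitz constraint $\text{width}_b\lesssim t$ gives, after Cauchy--Schwarz across the $\log t$ bands, the claimed $O(\sqrt{nt\ln t})$. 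I would set up the two families of inequalities — (a) $|S_j(\lambda)-S_j(\mu)|\le t$ for all $j$ from Proposition~\ref{prop:running-sum-t1} applied $t$ times, together with the conjugate version for columns, and (b) the trivial total-area bound $S_j\le n$ — and then phrase the whole thing as: maximize $\Delta$ over all pairs of diagrams satisfying (a) and (b), which is a clean extremal problem whose solution is a ``staircase-in-a-band'' shape of discrepancy.

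The main obstacle I anticipate is making the bulk estimate genuinely rigorous rather than heuristic: the running-sum Lipschitz bound controls $|S_j(\lambda)-S_j(\mu)|$ for every $j$, but turning ``$|S_j(\lambda)-S_j(\mu)|\le t$ for all $j$ and $S_j\le n$'' into a bound on $\sum_j|\lambda_j-\mu_j|$ requires knowing that the regime where both $S_j(\lambda)$ and $S_j(\mu)$ are small (so many long rows, the ``top'') and the regime where rows are short (the ``bulk'') cannot both contribute a lot; the square-root savings and the $\ln t$ come precisely from optimizing the crossover, and one has to be careful that the symmetric-difference cells do not all pile into one pathological row or column. The honest version of this is to treat it as the extremal problem above; I expect that is where the real work lies, and where the logarithmic loss relative to the conjectured $\sqrt{nt}$ truth enters.
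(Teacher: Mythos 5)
Your first step---iterating Proposition~\ref{prop:running-sum-t1} along the path of $t$ adjacent transpositions to get $\bigl|\sum_{i\le j}\lambda_i-\sum_{i\le j}\mu_i\bigr|\le t$ for every $j$ (plus the conjugate statement for columns), and then recasting the theorem as an extremal problem over pairs of diagrams subject to these constraints---is exactly how the paper begins (Lemma~\ref{lem:running-sum}, and, by subtracting the inequalities at block boundaries, Proposition~\ref{prop:blocks-lessthan-t}). But the substance of the paper's proof is the \emph{solution} of that extremal problem, and this is where your proposal has a genuine gap: neither mechanism you sketch produces $\sqrt{nt\ln t}$. The top/bulk split does not balance $rt$ against $n/r$: the discrepancy cells below row $r$ occupy at most $O(n/r)$ columns, and the column constraint only gives $O(t)$ discrepancy cells per column, so the bulk term is $O(nt/r)$, not $O(n/r)$; optimizing $rt+nt/r$ at $r\approx\sqrt n$ merely recovers the trivial $O(t\sqrt n)$ of Theorem~\ref{thm:upperbound-easy}. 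The dyadic-band step is likewise not a proof as stated: the quantity you propose to bound, $\sum_b \mathrm{width}_b\cdot\mathrm{height}_b$, is the same expression you impose as the area constraint, and Cauchy--Schwarz across $\log t$ bands using only ``width $\lesssim t$ per band, total area $\lesssim n$'' cannot beat $t\sqrt n$, because per-row and per-column counting never couples discrepancy occurring at different scales.

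The missing idea is the geometric input that makes many scales expensive. The symmetric difference splits into blocks $B_1,\dots,B_k$, each of area at most $t$ (this is the refined, two-sided use of the running sums, not the per-row bound), arranged along a staircase; and each pair $i<j$ of blocks forces roughly $a_ib_j$ cells of the intersection $W$, where $a_i\times b_i$ is the bounding rectangle of $B_i$. After the paper's reductions one must compare $\Delta\approx\frac12\sum_i a_ib_i$ with $N=\sum_{i\le j}a_ib_j\lesssim n$ under $a_ib_i\le 2t$ and $a_1=b_k=1$, and the paper's Lemma~\ref{lem:int-seq} (proved via a KKT/Lagrange analysis showing the extremal $a_i,b_i$ form a geometric progression, hence only $O(\ln t)$ blocks) is what yields $\Delta\le O(\sqrt{nt\ln t})$; the $\ln t$ is exactly the number of admissible scales. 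You correctly predicted that ``the real work lies'' in the extremal problem, but the proposal does not supply that work, and the heuristics offered in its place would not, even if made rigorous, improve on the trivial bound.
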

We start by showing some preliminary results that will be
useful in the proof. Suppose $\pi$ and $\tau$ are two permutations such that
$d(\pi,\tau)=t$. Let $\pi=\sigma_0,\sigma_1, \dots,\\ \sigma_t  =\tau$ be
a sequence of permutations such that for each $0
\le i < t$, $\sigma_i$ and $\sigma_{i+1}$ differ by an adjacent
transposition. Say that $s$ (resp. $r$) of the transpositions put
the relevant pair in decreasing (resp. increasing)
order, where $t=r+s$. Let $\lambda$ and $\mu$ be the
diagrams corresponding to $\pi$ and $\tau$ respectively.
\begin{lemma}
Let $\pi,\tau$ be as above. Then,
\begin{align}
\label{eq:running-sum}
\forall \ 1 \leq j \leq n,  \ \ \ \  \sum_{i=1}^j \mu_i - r\leq
\sum_{i=1}^j \lambda_i \leq \sum_{i=1}^j \mu_i +s
\end{align}
\label{lem:running-sum}
\end{lemma}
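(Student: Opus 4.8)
The plan is to telescope Proposition~\ref{prop:running-sum-t1} along the path $\pi=\sigma_0,\sigma_1,\dots,\sigma_t=\tau$, keeping track not merely of the size but also of the \emph{sign} of the change at each step. For $0\le k\le t$ and a fixed $1\le j\le n$, let $P_k(j)$ denote the partial sum $\sum_{i=1}^j (sh(\sigma_k))_i$; by Greene's Theorem~\ref{thm:Greene} this is exactly the largest total cardinality of a union of $j$ disjoint increasing subsequences of $\sigma_k$. Thus $P_0(j)=\sum_{i=1}^j\lambda_i$ and $P_t(j)=\sum_{i=1}^j\mu_i$, and it suffices to prove $-s\le P_t(j)-P_0(j)\le r$ for every $j$.

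Next I classify the steps. Write $\sigma_k=(a_k,a_k+1)\circ\sigma_{k-1}$. Call step $k$ \emph{decreasing} if the transposition leaves the relevant pair in decreasing order, i.e.\ $(\sigma_{k-1})_{a_k}<(\sigma_{k-1})_{a_k+1}$ while $(\sigma_k)_{a_k}>(\sigma_k)_{a_k+1}$, and \emph{increasing} otherwise; by hypothesis there are $s$ decreasing steps and $r$ increasing ones. For a decreasing step, apply Proposition~\ref{prop:running-sum-t1} with the roles of $\pi,\tau$ played by $\sigma_{k-1},\sigma_k$ (the hypothesis $\pi_i<\pi_{i+1}$ holds for $\sigma_{k-1}$): this gives $P_k(j)\le P_{k-1}(j)\le P_k(j)+1$, hence $P_k(j)-P_{k-1}(j)\in\{-1,0\}$. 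For an increasing step, note that $(a_k,a_k+1)$ is an involution, so $\sigma_{k-1}=(a_k,a_k+1)\circ\sigma_k$, and now the relevant pair of $\sigma_k$ is in increasing order; applying Proposition~\ref{prop:running-sum-t1} with $\pi,\tau$ played by $\sigma_k,\sigma_{k-1}$ gives $P_{k-1}(j)\le P_k(j)\le P_{k-1}(j)+1$, hence $P_k(j)-P_{k-1}(j)\in\{0,1\}$.

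Finally I telescope: $P_t(j)-P_0(j)=\sum_{k=1}^t\bigl(P_k(j)-P_{k-1}(j)\bigr)$. Each of the $s$ decreasing steps contributes a value in $\{-1,0\}$ and each of the $r$ increasing steps contributes a value in $\{0,1\}$, so the total lies in $[-s,r]$. Rewriting $-s\le\sum_{i=1}^j\mu_i-\sum_{i=1}^j\lambda_i\le r$ yields $\sum_{i=1}^j\mu_i-r\le\sum_{i=1}^j\lambda_i\le\sum_{i=1}^j\mu_i+s$, which is precisely \eqref{eq:running-sum}.

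The one place that requires care — and the reason the crude ``each step moves the partial sum by at most $1$'' estimate (which only gives $|P_t(j)-P_0(j)|\le t$) does not suffice — is the second step: one must verify that a decreasing step can only weakly \emph{decrease} $P_k(j)$ while an increasing step can only weakly \emph{increase} it. Concretely this comes down to invoking Proposition~\ref{prop:running-sum-t1} in the correct direction at each step, always casting the permutation whose relevant pair is in increasing order in the role of ``$\pi$''; this is what fixes the signs and upgrades the symmetric bound to the asymmetric $P_t(j)-P_0(j)\in[-s,r]$.
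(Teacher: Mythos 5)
Your proof is correct and follows essentially the same route as the paper: the paper's proof applies Proposition~\ref{prop:running-sum-t1} to each consecutive pair $\sigma_{k-1},\sigma_k$ and ``adds up all these inequalities,'' which is exactly your telescoping argument with the orientation of each step determining the sign of its contribution. Your write-up merely makes explicit the sign bookkeeping (decreasing steps contribute $\{-1,0\}$, increasing steps $\{0,1\}$) that the paper leaves implicit.
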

\begin{proof}
For each pair $\sigma_i, \sigma_{i+1}$ in
the sequence of permutations
$\pi=\sigma_0,\sigma_1, \dots,\sigma_t=\tau$, by
Proposition \ref{prop:running-sum-t1} the inequality 
\eqref{eq:running-sum-t1} holds for the diagrams corresponding to
  $\sigma_i$ and $\sigma_{i+1}$. The result is obtained by adding up
  all these inequalities. 
\end{proof}
\begin{figure}[t]
\center   
\resizebox{7cm}{!}{\input{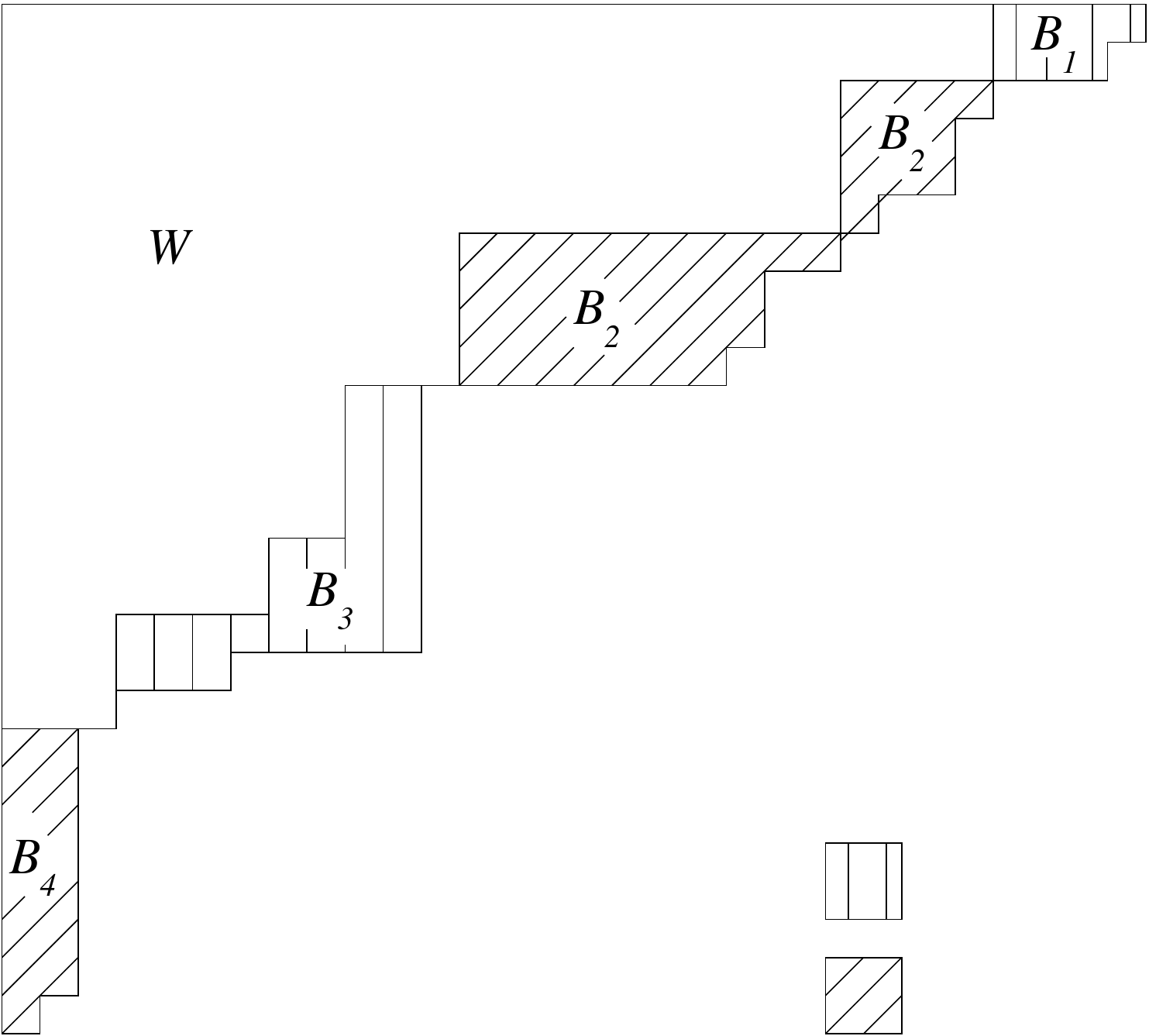_t}}
\caption{The union of $\lambda$ and $\mu$ and the symmetric
  difference split into blocks.} 
\label{fig:tableaux-blocks}
\end{figure}
In Figure \ref{fig:tableaux-blocks} we depict the union of two
diagrams. Their intersection is labeled $W$ as before. We split
the symmetric difference of the two diagrams into {\em blocks}.
We say that $j$ indexes a $\lambda$-row if $\lambda_j > \mu_j$.
A maximal interval of $\lambda$-rows determines
a {\em $\lambda$-pre-block}. A maximal collection
of consecutive $\lambda$-pre-blocks constitutes a
{\em $\lambda$-block}. We likewise define $\mu$-blocks.
Blocks are  labeled $B_i$ as in the figure.
The number of cells in a set $S$ will be denoted by $A(S)$.
We use the following fact about the sizes of the blocks.
\begin{proposition}
\label{prop:blocks-lessthan-t}
Let $d(\pi,\tau)=t$ with corresponding diagrams $\lambda$, $\mu$
and let $B$ be a block in the union of the diagrams, then $A(B) \leq t$. 
\end{proposition}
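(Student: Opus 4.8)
The plan is to read the bound straight off the running-sum inequality of Lemma~\ref{lem:running-sum}. Fix the geodesic $\pi=\sigma_0,\dots,\sigma_t=\tau$ used in that lemma, and let $r$ (resp.\ $s$) be the number of its transpositions that place the relevant pair in increasing (resp.\ decreasing) order, so $r+s=t$. Set $S_j:=\sum_{i=1}^j(\lambda_i-\mu_i)$ for $0\le j\le n$. Then Lemma~\ref{lem:running-sum} says exactly that $-r\le S_j\le s$ for every $j$, and moreover $S_0=0=S_n$ since $\lambda,\mu\vdash n$.

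The one combinatorial point to nail down is that the row-span of a $\lambda$-block meets no $\mu$-rows. Indeed, a $\mu$-row lies in some $\mu$-pre-block, and that pre-block separates any two $\lambda$-pre-blocks on either side of it, so those pre-blocks are not consecutive and cannot lie in a common $\lambda$-block. Hence, if a $\lambda$-block $B$ occupies rows $a,a+1,\dots,b$ (with $a$ and $b$ themselves $\lambda$-rows, and the rows in between either $\lambda$-rows or equality rows), we have $\lambda_j\ge\mu_j$ for every $j$ in this range.

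Now the count telescopes: the cells of the symmetric difference in rows $a,\dots,b$ are precisely the cells $(j,\mu_j+1),\dots,(j,\lambda_j)$ over the $\lambda$-rows $j$ of $B$, so
\[
A(B)=\sum_{j=a}^{b}(\lambda_j-\mu_j)=S_b-S_{a-1}.
\]
Applying the bounds from Lemma~\ref{lem:running-sum} at the two endpoints gives $S_b\le s$ and $S_{a-1}\ge -r$, hence $A(B)\le s+r=t$. The $\mu$-block case is symmetric: on its span one has $\mu_j\ge\lambda_j$ throughout, so $A(B)=S_{a-1}-S_b$, and now $S_{a-1}\le s$ together with $S_b\ge -r$ again yields $A(B)\le t$. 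There is no genuinely hard step; the only care needed is in the second paragraph — unpacking the definition of a block so that its span contains cells of only one diagram, which is what lets $A(B)$ collapse to a single difference of partial sums — plus the harmless conventions $S_0=S_n=0$ at the boundary.
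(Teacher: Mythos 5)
Your proof is correct and takes essentially the same route as the paper: both evaluate the running-sum inequalities of Lemma~\ref{lem:running-sum} at the two endpoints of the block's row-span (i.e.\ $j=i_1=\max I$ and $j=i_0$, the row just before $I$) and subtract, using $r+s=t$. The only difference is that you spell out the step the paper leaves implicit — that within a single block's span the differences $\lambda_j-\mu_j$ have a fixed sign, so $A(B)$ collapses to a single difference of partial sums — and that you fold the no-$i_0$ case into the convention $S_0=0$ rather than treating it separately.
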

\begin{proof}
This bound is obtained from Lemma \ref{lem:running-sum} as follows.  
Let $B$ reside in the set of rows $I$ of the diagram. Assuming it
exists, let $i_0$
be the row just preceding $I$, and $i_1=\max I$. Then the bound is obtained
by subtracting the 
inequality \eqref{eq:running-sum} corresponding to $j=i_0$ from the
inequality
corresponding to $j=i_1$, and using the fact that $r+s=t$. If there is
no row $i_0$, then the bound is immediate from the inequality for $j=i_1$. 
\end{proof}
The main step in the proof of Theorem \ref{thm:t-3/4} is the
following lemma about two sequences of integers. 
\begin{lemma}
\label{lem:int-seq}
Let $k \geq 2$, $T\geq 3$ and let $a_1,\dots,a_k$ and
$b_1,\dots,b_k$ be two sequences of positive integers.
Denote $\Delta = \sum_{i=1}^k a_ib_i$ and $N=\sum_{1 \leq i \leq j \leq k} a_ib_j.$
If
\[
a_1=b_k=1 \ \mathrm{and} \ \forall i, \ a_i b_i \leq T,
\]
then
\[
\Delta \leq \sqrt{32 N T \ln T}.
\]
This bound is tight up to constants.
\end{lemma}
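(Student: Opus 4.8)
\textbf{Proof proposal for Lemma \ref{lem:int-seq}.}

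The plan is to bound $\Delta = \sum_i a_ib_i$ by splitting the index set $\{1,\dots,k\}$ according to the size of the term $a_ib_i$, using a dyadic partition. For a parameter $\theta \ge 1$ (to be chosen near $\sqrt{T}$, up to logarithmic corrections), call index $i$ \emph{large} if $a_ib_i > \theta$ and \emph{small} otherwise. The small indices contribute at most $k\theta$ to $\Delta$, so we will need a bound on $k$ of the form $k = O(\sqrt{N/T})$ or so; this should follow from the boundary conditions $a_1 = b_k = 1$ together with $a_ib_i \le T$, since $N \ge \sum_{i} a_i b_i + (\text{cross terms})$ and the cross terms $a_ib_j$ with $i<j$ are forced to be reasonably large when many indices have $a_i$ or $b_i$ bounded below. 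For the large indices, I would further partition them into $O(\log T)$ dyadic classes $L_\ell = \{i : 2^{\ell-1}\theta < a_ib_i \le 2^\ell \theta\}$, and in each class bound $\sum_{i \in L_\ell} a_ib_i$ by $|L_\ell| \cdot 2^\ell\theta$, then argue that $|L_\ell|$ cannot be too large because any two indices in the same class with, say, $a_i$ both large force a big cross term in $N$.

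The heart of the argument is the following geometric observation, which I would isolate as a sub-claim: if $i < j$ then $a_ib_j \ge \sqrt{a_ib_i}\cdot\sqrt{a_jb_j}$ does \emph{not} hold in general, so instead I would use monotonicity-type information coming from the extremal structure. Concretely, $a_ib_j$ is large whenever $a_i$ is large (for $j \ge i$) or $b_j$ is large (for $i \le j$). So for a fixed dyadic class, writing $P = a_ib_i \asymp 2^\ell\theta$, each index either has $a_i \ge \sqrt{P}$ or $b_i \ge \sqrt{P}$. Among the indices in the class with $a_i \ge \sqrt{P}$, pairing each such index $i$ with $j = k$ (using $b_k = 1$ is not enough) — rather, pairing index $i$ with any $j \ge i$ in the class gives a cross term $a_ib_j \ge \sqrt{P}$, and there are $\binom{m}{2}$ such pairs if the class has $m$ indices, so $N \ge \binom{m}{2}\sqrt{P}$, giving $m = O(\sqrt{N/\sqrt{P}})$. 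Then $\sum_{i \in L_\ell} a_ib_i \le mP = O(\sqrt{N}\cdot P^{3/4}) = O(\sqrt{N}(2^\ell\theta)^{3/4})$. Summing the geometric series over $\ell$ from $1$ to $\log_2(T/\theta)$ is dominated by the top term $\ell_{\max}$ with $2^{\ell_{\max}}\theta \asymp T$, giving $O(\sqrt{N}\,T^{3/4})$ — which is too weak. This tells me the dyadic bound alone loses too much, and the right move is to balance: choose $\theta$ so that the small-index contribution $k\theta$ and the large-index contribution match, and be more careful that in each class $N$ sees \emph{all} $\binom{m}{2}$ cross terms simultaneously across classes only once, i.e. track that the $m_\ell$ are constrained by a \emph{single} budget $N$, so $\sum_\ell m_\ell^2 \sqrt{2^\ell\theta} = O(N)$ rather than each class separately. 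Optimizing $\sum_\ell m_\ell P_\ell$ subject to $\sum_\ell m_\ell^2 \sqrt{P_\ell} \le CN$ by Cauchy–Schwarz yields $\Delta \le \sqrt{N \sum_\ell P_\ell^{3/2}} / (\text{something})$; with $P_\ell$ ranging dyadically up to $T$ and there being $O(\log T)$ terms, the honest bound comes out as $O(\sqrt{NT\log T})$ after choosing $\theta \asymp \sqrt{T}$, which matches the target.

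So the key steps, in order, are: (1) prove the crude bound $k = O(\sqrt{N/T})$ from $a_1 = b_k = 1$ and $a_ib_i \le T$ (likely: $N \ge \sum_{i<j} a_ib_j \ge \sum_{i<j} \min(a_i,b_j)$, and use that $a_i \cdot b_i \le T$ forces $a_i \ge a_ib_i/T$, chaining the two boundary conditions); (2) establish the cross-term lower bound: within a dyadic class of size $m$ and level $P$, $N \ge c\, m^2 \sqrt{P}$; (3) combine via Cauchy–Schwarz over the $O(\log T)$ dyadic levels with the shared budget $N$, plus the small-index term $k\theta$; (4) choose $\theta \asymp \sqrt{T}$ and collect constants to get $\Delta \le \sqrt{32 N T \ln T}$. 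The main obstacle I anticipate is step (2): making the cross-term argument genuinely give $m^2\sqrt{P}$ rather than $m^2$ (losing the $\sqrt{P}$) requires exploiting that in a class at level $P$ either $a_i$ or $b_i$ is at least $\sqrt{P}$, and that for the ``$a_i$ large'' indices the relevant cross terms pair a large $a_i$ with \emph{any later} index (whose $b_j \ge 1$ trivially gives only $\sqrt{P}$, not more) — so one must check that enough of the $\binom{m}{2}$ pairs have the large coordinate on the correct side, which should follow by splitting the class into its $a$-heavy and $b$-heavy halves and noting the $a$-heavy indices all precede-or-interleave with... actually one wants $a$-heavy paired with later indices, so order matters and a short combinatorial case analysis is needed. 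For tightness, I would exhibit $a_i, b_i$ geometric in opposite directions (say $a_i \asymp \sqrt{T}\, 2^{i}$ capped, $b_i$ the mirror image) over $k \asymp \sqrt{N/T}\log T$ indices so that every term $a_ib_i \asymp T$ but the cross terms stay $\Theta(N)$, realizing $\Delta = \Theta(\sqrt{NT\log T})$.
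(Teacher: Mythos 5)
Your approach via a dyadic decomposition of the indices by the size of $a_ib_i$, together with a within-class cross-term lower bound, is a genuinely different route from the paper, which minimizes $N/\Delta^2$ through a continuous relaxation, KKT conditions and a reduction to geometric sequences. Unfortunately the proposal has a real gap at its core, and the ``be more careful about the shared budget'' step you sketch cannot close it.

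The budget constraint you extract, $\sum_\ell m_\ell^2\sqrt{P_\ell} = O(N)$, is \emph{precisely} saturated by a configuration with all the mass at the top level $P_{\max}=T$ and $m_{\max}\asymp\sqrt{N/\sqrt T}$, and for that configuration $\sum_\ell m_\ell P_\ell\asymp\sqrt N\,T^{3/4}$. That is, the Cauchy--Schwarz optimization over $(m_\ell)$ subject only to $\sum_\ell m_\ell^2\sqrt{P_\ell}\le CN$ honestly has value $\Theta(\sqrt N\,T^{3/4})$, not $O(\sqrt{NT\ln T})$; there is no amount of care in carrying out that Cauchy--Schwarz that improves the exponent. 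Tracking that $N$ is a single budget across classes is exactly what you are already doing, and it does not help. Compare with the actual extremal configuration (which the paper's relaxation reveals): $k=\Theta(\ln T)$ indices, all with $a_ib_i\asymp T$, and $a_i,b_i$ geometric with ratio bounded away from $1$. There the true value of $N$ is $\Theta(kT)=\Theta(T\ln T)$, while your within-class lower bound gives only $N\gtrsim m^2\sqrt T=\Theta(\sqrt T (\ln T)^2)$, which is smaller by a factor of roughly $\sqrt T/\ln T$. So the bound $N_{\mathrm{class}}\ge c\,m^2\sqrt P$ (which is correct, via the good-pair count you describe) is far too lossy to recover the $\ln T$ in the answer. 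The missing ingredient is the structural consequence of the boundary conditions $a_1=b_k=1$ combined with $a_ib_i\approx T$ for many $i$: this forces $a_i$ to run up from $1$ to $\approx T$ and $b_i$ to run down from $\approx T$ to $1$ across $O(\ln T)$ multiplicative steps, so $m_\ell=O(\ln T)$ in the top classes. The paper extracts exactly this via Lagrange multipliers (the optimal $a_i$ form a geometric series with ratio $c>2$, hence $k<2\ln T+1$). A purely dyadic argument would need to re-derive some version of ``at most $O(\ln T)$ indices can have $a_ib_i\asymp T$ given the boundary conditions,'' which is not supplied by counting good pairs within a class. Separately, your claimed bound $k=O(\sqrt{N/T})$ in step (1) is false in general (take $a_i=b_i=1$ for all $i$: then $k\asymp\sqrt{2N}$); what actually holds is $k=O(\sqrt N)$, which does suffice for the small-index contribution $k\theta=O(\sqrt{NT})$ when $\theta\asymp\sqrt T$, so the small-index part of your plan is fine even though the stated bound on $k$ is not.
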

We first show how to derive the theorem from Lemma \ref{lem:int-seq}.

Let $\lambda$ and $\mu$ be two diagrams of size $n$ (not necessarily
corresponding to permutations at distance $t$). For the union of these
diagrams, define the blocks of
the symmetric difference $\{B_i\}$ and $W$ as before. Suppose that for
each block $B$, $A(B) \leq t$. To prove Theorem \ref{thm:t-3/4}, it
is sufficient to show that for these diagrams,
 \begin{align}
\label{eq:bound-on-diagrams}
\frac{1}{2}\sum_i A(B_i) \leq O\left(\sqrt{t \ln t\left[A(W)+\frac{1}{2}
\sum_i A(B_i)\right]}\right)
\end{align}
With this formulation in mind, we can make the following assumptions
about the pair of diagrams. The aim is to make a number of
transformations and show that the pair of diagrams can be assumed to
be of the form shown
in Figure \ref{fig:diagrams-with-blocks-final}.
\begin{figure}[t]
\center \includegraphics[width=7cm]{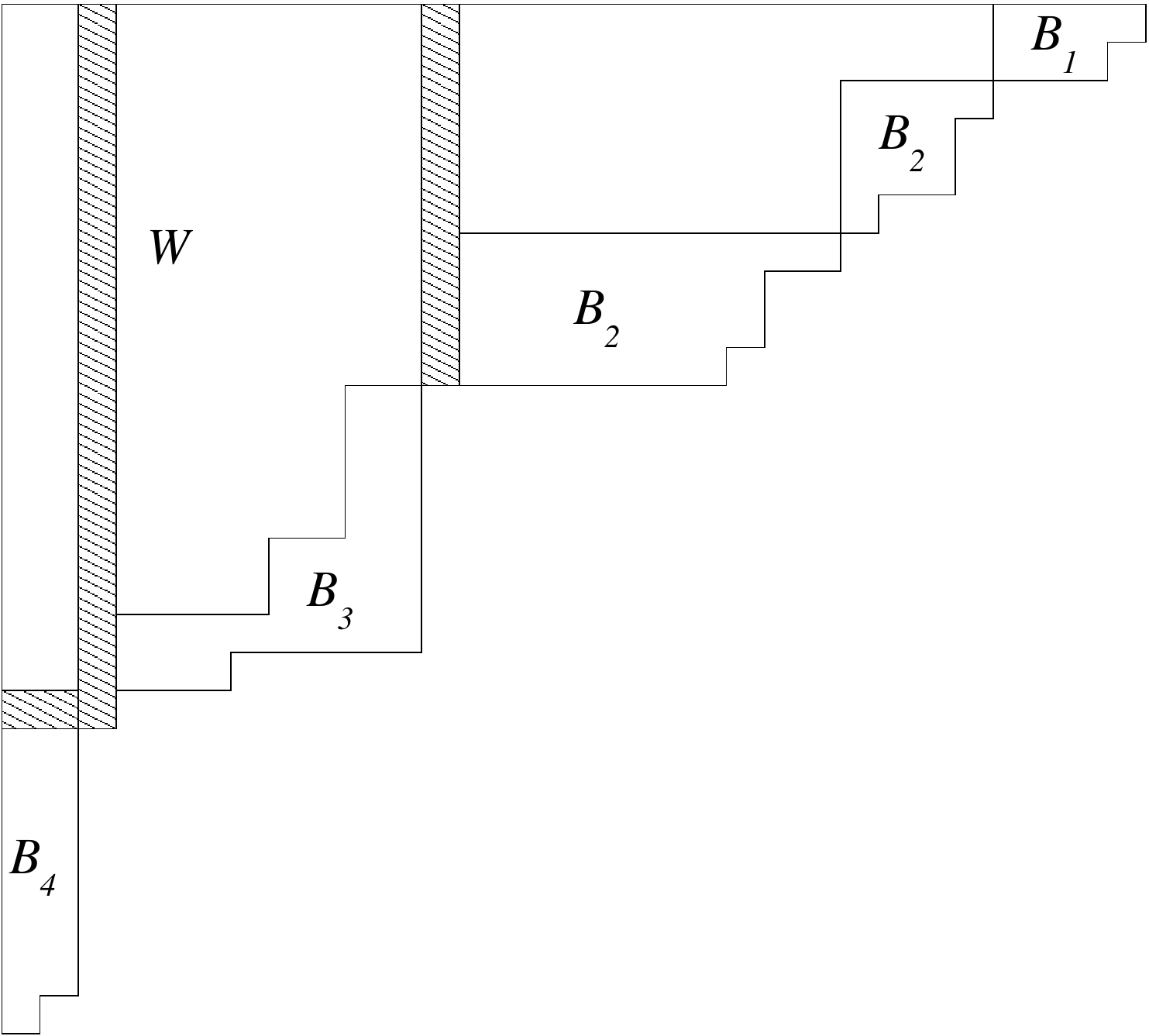}
\caption{Rows and columns of $W$ that may be removed.}
\label{fig:tableaux-shaded-row}
\end{figure}

\begin{reduction}\label{assump:1}

For any row $i$, $\lambda_i \ne \mu_i$ and similarly, for any column
$j$, $\lambda_j' \ne \mu_j'$.
If this is not the case (as in the shaded part of Figure
  \ref{fig:tableaux-shaded-row}), we delete such rows or columns
from both $\lambda$ and $\mu$. Consequently, $A(W)$ decreases, whereas
$\sum_i A(B_i)$ remains unchanged.
Thus, if the bound holds for the new pair of diagrams, it holds
as well for the old pair.  \end{reduction}

\begin{reduction}\label{assump:2}
In general, each block is a skew-diagram (the
set theoretic difference of a diagram and another contained in
it). However, as we show, we may assume it is a Young diagram. 
The dotted lines in Figure \ref{fig:blocks-top-left} mark the
``shade'' of a block in $W$ determined by its top row and leftmost
column. If a block is not a (left-aligned) 
tableau, we can change it to one by removing the cells of $W$ in its
shade and replacing it with a Young diagram of area $A(B)$ contained
in the union of the block and its shade.
\begin{figure}
\center \includegraphics[width=7cm]{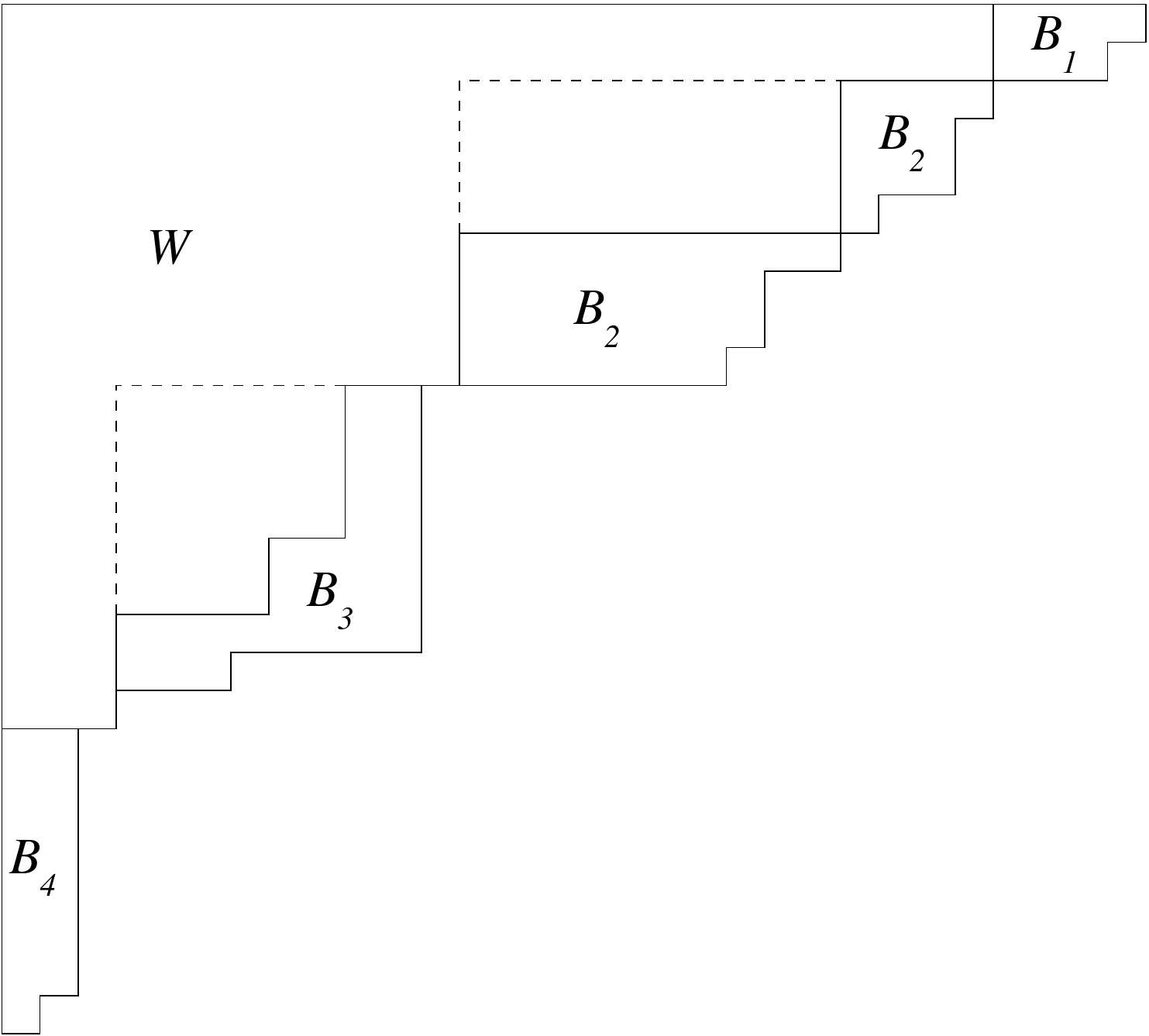}
\caption{The top left corner of the block.}
\label{fig:blocks-top-left}
\end{figure}
\begin{figure}
\center \includegraphics[width=7cm]{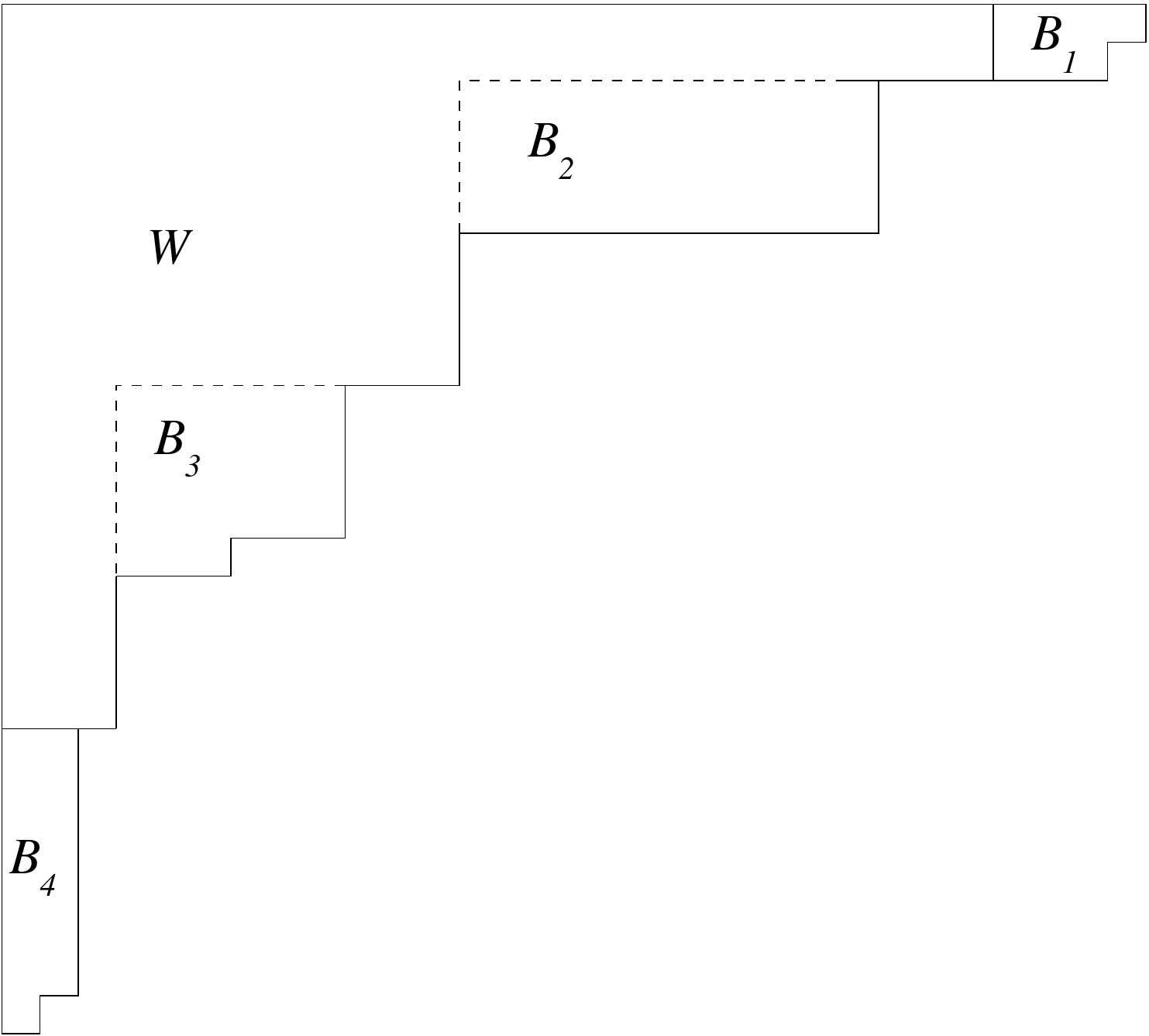}
\caption{All blocks are Young diagrams.}
\label{fig:blocks-top-left-fixed}
\end{figure}
\begin{figure}
\center \includegraphics[width=7cm]{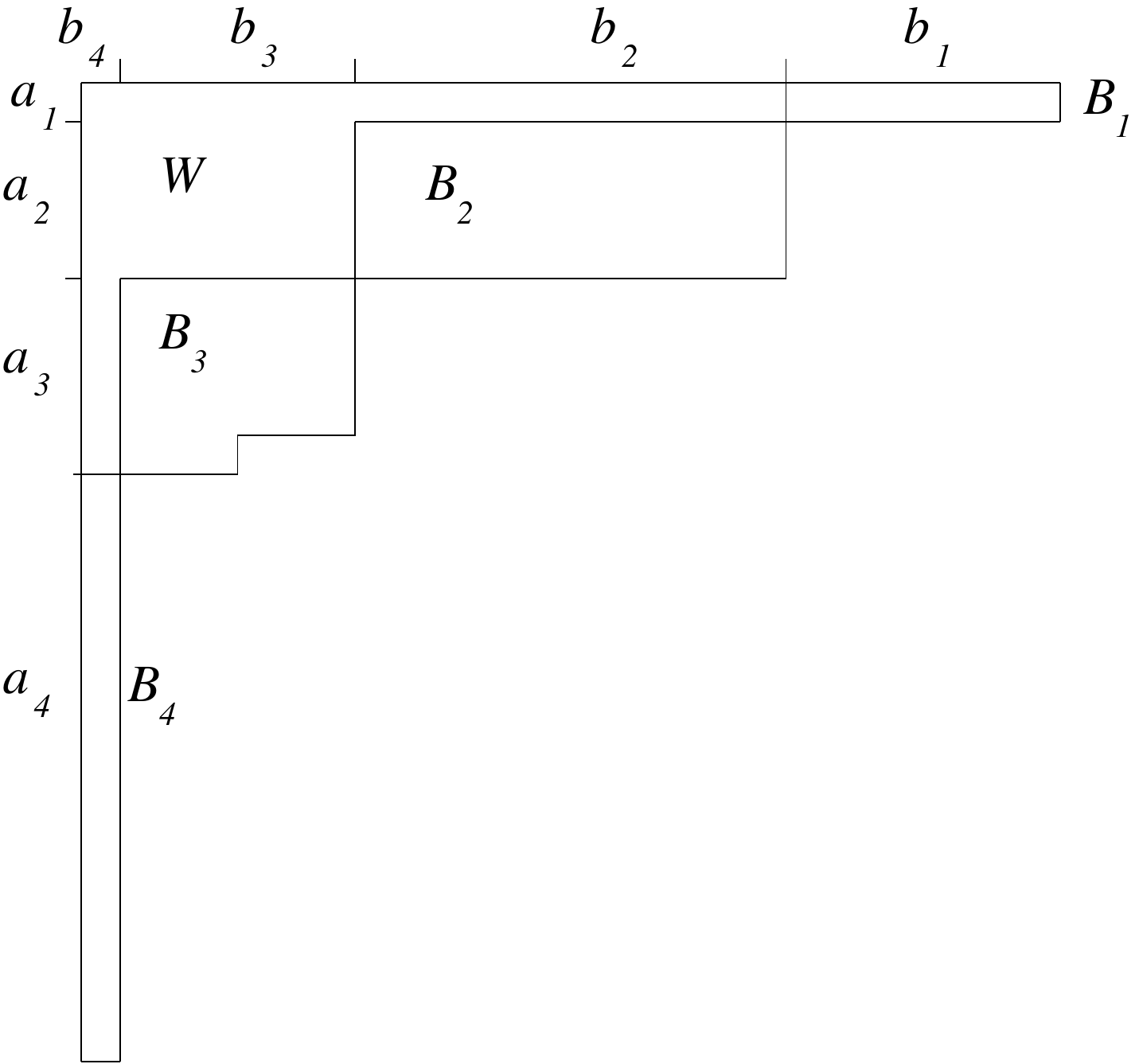}
\caption{A box of side lengths $a_i$ and $b_i$ bounds $B_i$.}
\label{fig:diagrams-with-blocks-final}
\end{figure}
\end{reduction}
This transformation decreases $A(W)$ and keeps the size of the block
fixed.
Secondly, we may assume that the transformation is done so that all rows of a
block, with the possible exception of the last one have the same length.
The result of such a transformation on the blocks $B_2$ and $B_3$ is
shown in Figure \ref{fig:blocks-top-left-fixed}.




\begin{reduction}\label{assump:3} We may assume that the topmost
  block $B_1$ has a single row. Otherwise, we can shift all the cells
  of $B_1$ to the first row without 
  changing any $A(B_i)$ or $A(W)$. We can then delete any rows of $W$
  which are of the same length in $\lambda$ and $\mu$. By similar reasoning,
  we may assume that the bottommost block has a single column.
\end{reduction}

Thus, we may assume that the diagrams are as shown in Figure
\ref{fig:diagrams-with-blocks-final} and that the sizes of the blocks 
are bounded by $t$. As in the figure, let $a_i$ and
$b_i$ denote the lengths of the vertical and horizontal sides of the
rectangle which bounds the block $B_i$. Thus the area $A(W)$ can be
written as a sum of areas of rectangles $a_ib_j$ whose sides are
determined by the side lengths of pairs of blocks. 
Also note that by our construction of the blocks, $a_ib_i \leq 2t$.

To obtain the formulation of the lemma, suppose that we add cells to
the last row of each block to  
  complete it to a rectangle. Denote the modified blocks by
  $B'$. Then for each block, $A(B') \leq 2A(B)$. If we show the bound for
  these modified diagrams with a bound of $2t$ for each block,
  then the bound is implied for the original diagrams since the constants
  can be absorbed by the $O(\cdot)$. Formally, this follows
  from the following inequalities. 

\begin{enumerate}
\item $\frac{1}{2}\sum_i A(B_i) \le \frac{1}{2}\sum_i A(B_i') $
\item $A(W)+\frac{1}{2}\sum_i A(B_i') \le 2 \left(A(W)+\frac{1}{2}\sum_i A(B_i) \right)$
\end{enumerate}

Thus Lemma \ref{lem:int-seq} implies the bound
\eqref{eq:bound-on-diagrams} for a pair of diagrams as above and we
have verified that to prove Theorem \ref{thm:t-3/4} it is 
sufficient to prove the lemma. \\

\noindent{\bf Proof of Lemma \ref{lem:int-seq}:} We will minimize $N/\Delta^2$. 
For $k=2$, the lemma can 
  be easily verified by calculation once we use the fact that
  $a_1=b_2=1$. Thus we 
  will assume that $k \geq 3$. 
Consider the following relaxation of the
minimization problem where the $a_i,b_j$ are not necessarily integral.

\begin{align*}
\nonumber \min \frac{N}{\Delta^2}=\frac{\displaystyle\sum_{1 \leq i
    \leq j \leq k} a_ib_j}  
{\left(\displaystyle\sum_{i=1}^k a_ib_i \right)^2}\\ 
s.t.   \ \ \ \ \  \ \ \ \ \ \ \ a_1= b_k=1\\
a_ib_i \le T, \ 1 \le i \le k \\
a_i \geq 1, \ 2 \leq i \leq k \\
b_i \geq 1, \ 1\leq i \leq k-1
\end{align*}

We will use the method of Lagrange multipliers (see Appendix \ref{app:lagrange}
for a brief introduction) to obtain a lower bound on the value of the
objective above at any local optimum. Since the problem is a
relaxation of the discrete minimization problem, this also lower
bounds the objective of the discrete problem.
We obtain the following Lagrangian for the relaxation above.
\begin{align*}
\min \mathcal L  = & \frac{N} 
{\Delta^2} - \sum_{i=1}^k\lambda_i(a_ib_i-T)-
\sum_{i=1}^k \mu_i(a_i - 1)-
\sum_{i=1}^{k} \nu_i(b_i - 1) \nonumber \\
\end{align*}
The Karush-Kuhn-Tucker conditions yield the
following necessary conditions for minimality. 
\begin{align}
& \frac{\partial}{\partial a_i}\mathcal L = \frac{\partial}{\partial
  a_i}\frac{N}{\Delta^2} - \lambda_ib_i - \mu_i = 0,  \quad & 1 \leq i \leq k
  \label{eq:del-a} \\
& \frac{\partial}{\partial b_i}\mathcal L =
\frac{\partial}{\partial  
  b_i}\frac{N}{\Delta^2} - \lambda_ia_i - \nu_i = 0,  \quad & 1 \leq i \leq
  k \label{eq:del-b} \\
& \lambda_i \ge 0,~~ \lambda_i(T-a_ib_i)=0, \quad & 1 \le i \le k \nonumber\\
& \mu_i \geq 0, \quad \mu_i(a_i-1) = 0, \quad & 2 \leq
i \leq k \nonumber\\ 
& \nu_i \geq 0, \quad \nu_i(b_i-1) = 0,
\quad & 1\leq i \leq k-1 \label{eq:Lagrange-zeros} 
\end{align}

From these conditions, we can show that at optimality either $a_ib_i
=T$ or $1$. Suppose  
that for some $i$, $a_ib_i<T$. Note that by the conditions above, this
implies $\lambda_i=0$. Now, if $a_ib_i \neq 1$, at least one of $a_i$
or $b_i$ is $>1$. Assume without loss of generality that $b_i >1$ (the
argument in the other case is exactly the same). In this case
$\nu_i=0$ by \eqref{eq:Lagrange-zeros} . Hence from \eqref{eq:del-b}
above, we have 
\begin{align*}
\frac{\partial}{\partial b_i}\frac{N}{\Delta^2}=0
\end{align*}
and therefore, since $\Delta>0$
\begin{align}
\nonumber & \Delta^2\frac{\partial N}{\partial b_i} = 2\Delta N\frac{\partial
  \Delta}{\partial b_i}\\
\Rightarrow \quad & \frac{\displaystyle\sum_{j=1}^i a_j}{2a_i} =
\frac{N}{\Delta} \label{eq:optimality-N-Delta}
\end{align} 
Now we show that it is possible to increase $b_i$ by a factor
$(1+\varepsilon)$ for $\varepsilon>0$ so that $N/\Delta^2$ decreases
and we can conclude that the solution is not optimal.
This is allowed, at least for $\varepsilon>0$ small enough,
since, by assumption $a_i b_i < T$. Let $N'$ and
$\Delta'$ be the summations as defined before for the sequences where we
replace $b_i$ by $b_i(1+\varepsilon)$.
\begin{align*}
\frac{N'}{(\Delta')^2} = \frac{N+ b_i \varepsilon
  \displaystyle\sum_{j=1}^i a_j}{(\Delta + a_ib_i\varepsilon)^2}
\end{align*}
To prove the claim $N'/(\Delta')^2< N/\Delta^2$, using the right hand side
above, it is enough to show that
\begin{align*}
\Delta^2 b_i\varepsilon\displaystyle\sum_{j=1}^i a_j  <
N\left(2\Delta a_ib_i\varepsilon + (a_ib_i\varepsilon)^2\right)
\end{align*}
Or equivalently, dividing throughout by $2 a_i b_i \varepsilon
\Delta^2$, that
\begin{align*}
\frac{\displaystyle\sum_{j=1}^i a_j}{2a_i}  < \frac{N}{\Delta} +
\frac{Na_ib_i \varepsilon}{2\Delta^2}.
\end{align*}
This inequality follows by \eqref{eq:optimality-N-Delta}. The left-hand
term equals the first term on the right and $\varepsilon>0$.

The next step is to argue that it is enough to show the claimed bound
assuming that the blocks
are arranged in a specific manner (i.e., the sequences are of a
certain form). In particular, the blocks of
area $T$ are arranged such that $a_i$ is increasing and $b_i$ is
decreasing. Secondly, the
blocks of area $1$ occur after all blocks such that $a_i \leq b_i$ and
before all blocks such that $a_i> b_i$. This can be argued by
noticing that such an arrangement can be achieved by exchanging blocks
which are out of order since $\Delta$ remains unchanged and $N$ does not
increase. Thus a lower bound on $N/\Delta^2$ for the modified sequence
is a lower bound on the corresponding quantity for the original sequence.

We next argue that, in fact, w.l.o.g. no block has area $1$.
Recall that we wish to show \[32T \ln T N - \Delta^2
\geq 0.\] We will show that if we add a single block of area $1$ then
 
\begin{align}
32T \ln T N' - \Delta'^2
\ge 32T \ln T N - \Delta^2
\label{eq:no-1-blocks}
\end{align}
where $N'$ and $\Delta'$ are the modified values of $N$ and
$\Delta$. This inequality above allows us to reduce the argument to the
case when there are no blocks of area $1$. Let the shorter sequence have $k$
terms. Note that $\Delta'=\Delta+1$ and the change in the number of cells
$N' - N$ is at least
$\sum_{i=1}^k \min(a_i,b_i)$.

Recall that $k,T \geq 3$ and for $1 \le i \le k$, $a_i,b_i \ge 1$. Thus,
\begin{align*}
(\Delta')^2 - \Delta^2 = & \quad 2\Delta+1 \\ 
 \le & \quad  2kT+1 \\
 \le  & \quad 32k T \ln T \\
\le  & \quad 32 T \ln T \sum_{i=1}^k \min(a_i,b_i)\\
=  & \quad 32 T \ln T (N'-N)
\end{align*}
which implies the required inequality \eqref{eq:no-1-blocks}.

In the next step, we will make a further simplification to the
picture. To summarize, we now know that we may optimize over sequences
such that each block has size $T$, $a_1=b_k=1$, the sequence $a_i$ is
non-decreasing and $b_i$ is non-increasing. The claim is that the
optimal solution is of the form where there is some $i$ such that
$a_i>1$ and $b_i>1$. If not, then it can be checked that $\Delta =
\sqrt{8NT}$ and the claimed bound holds.

We relabel the sequences $a_{-\ell_1},\dots,a_{-1},a_1,\dots,a_{k},
a_{k+1},\dots,a_{k+\ell_2}$ 
and
$b_{-\ell_1},\dots ,\\ b_{-1},b_1,\dots,b_k,b_{k+1},\dots,b_{k+\ell_2}$
where $\ell_1,\ell_2 \ge 1$ and $k \geq 0$ so that $a_i,b_i>1$ for
$1\le i \le k$. Let $N$ and $\Delta$ be 
the corresponding summations as defined before. We can reformulate the
minimization problem as follows. 
\begin{align}
\nonumber \min \frac{N}{\Delta^2}\\
\nonumber s.t.   \ \ \ \ \ \ \ a_i=1=b_{k+j}, a_{k+j}=T=b_i, & \quad -\ell_1
\le i \le -1, \quad 1 \le j \le \ell_2\\
\nonumber a_ib_i = T, & \quad \forall i \\
a_i,b_i \geq 1 & \quad 1 \le i \le k 
\label{eq:min-ab-is-T}
\end{align}

Solving this optimization problem gives the following conditions for
the solutions (see Proposition \ref{prop:geometric} in the Appendix
\ref{sec:minimization} for the detailed calculations). 
The sequences ${b_{1 \leq i \leq k}}$ and (hence ${a_{1 \leq i \leq
    k}}$) are a geometric series with 
\[
c= \frac{b_1}{b_2} = \dots = \frac{b_{k-1}}{b_{k}}
\]
and
\[
c= \frac{a_2}{a_1} = \dots = \frac{a_{k}}{a_{k-1}}
\]
and the ratio between successive terms $c>1$.
Also,
\[
b_k = (c-1)\ell_2.
\]
and
\[
a_1 = (c-1)\ell_1.
\]
Substituting, we also have that
\begin{align}
b_1 = c^{k-1}b_k = c^{k-1}(c-1)\ell_2 \nonumber \\
T = a_1b_1 = c^{k-1}(c-1)^2\ell_1 \ell_2 \label{eq:T-and-b1}
\end{align}
Since $b_k, a_1 > 1$ we have
\begin{align*}
c > 1+ \frac{1}{\max\{\ell_1,\ell_2\}}.
\end{align*}
Furthermore, $c^{k-1} < T$ and therefore
\begin{align}
k-1 < \frac{\ln T}{\ln c} \leq \frac{\ln T}{\ln
  (1+1/\max\{\ell_1,\ell_2\})}.
\label{eq:k-upper-bound}
\end{align}

In the next step we will show the w.l.o.g. we may assume $\ell_1=\ell_2=1$.
Let $N_0 = \sum_{1 \le i \le j \le k} a_ib_j +\sum_i b_i + \sum_i a_i +1 +2T$
and $\Delta_0 = \sum_{i=1}^k  a_ib_i +2T$. These are the values of the
summations with the first $\ell_1-1$ and last $\ell_2-1$ members of
the sequences removed. Then 
\begin{align*}
N & = N_0 + T\frac{\ell_1(\ell_1-1)}{2}+T\frac{\ell_2(\ell_2-1)}{2} \\
& ~~~~~~~~~~~~ +(\ell_1-1)\sum_{i=1}^k b_i + (\ell_2-1)\sum_{i=1}^k a_i +
(\ell_1-1)(\ell_2-1), \\  
\Delta & = \Delta_0 + T(\ell_1+\ell_2 -2)
\end{align*}

We have above that $\ell_1,\ell_2 \ge 1$. We
will show that the optimal of $32T\ln TN - \Delta^2$ when $N_0$ and
$\Delta_0$ are fixed is at $\ell_1=\ell_2=1$ by showing 
\[
32T\ln TN - \Delta^2 \ge 32T\ln TN_0 - \Delta_0^2.
\]
Wlog, suppose that $\ell_1 \geq \ell_2$ so that $\ell_1 \ge
2$. Therefore by \eqref{eq:k-upper-bound} and using the fact that for
$x \le 1/2$, $\ln(1+x) \ge x/2$, we have
\begin{align}k-1 < \frac{\ln T}{\ln
  (1+1/\max\{\ell_1,\ell_2\})} \leq 2 \ln T \max\{\ell_1,\ell_2\}.
\label{eq:k-upper-lnT}
\end{align}
Now, we have
\begin{align*}
32 T\ln T (N-N_0) \geq  32 T\ln T
  (T\frac{\ell_1(\ell_1-1)}{2}).
\end{align*}
On the other hand, by the bound from \eqref{eq:k-upper-lnT} on $k$,
\begin{align*}
\Delta^2 - \Delta_0^2 & = 2\Delta_0T(\ell_1+\ell_2-2) +
T^2(\ell_1+\ell_2-2)^2\\
& = 2kT^2(\ell_1+\ell_2-2) + T^2(\ell_1+\ell_2-2)^2\\
& \le 4kT^2(\ell_1-1) + 4T^2(\ell_1-1)^2 \\
& \le 12T^2 \ln T\ell_1(\ell_1-1) + 4T^2(\ell_1-1)^2\\
& \le 16T^2 \ln T\ell_1(\ell_1-1)\\
& \le 32 T\ln T
  (T\frac{\ell_1(\ell_1-1)}{2})\\
& \le 32 T\ln T (N-N_0) 
\end{align*}
as required.
Finally, if $\ell_1=\ell_2=1$, then using the fact that $b_1 \leq T$, from
\eqref{eq:T-and-b1}, we obtain that $c>2$. Hence by
\eqref{eq:k-upper-lnT} $k<2\ln T +1$.  Note that if
$\ell_1=\ell_2=1$, $\Delta = (k+2)T$. We can then use the following
straightforward bound. 
\begin{align*}
\frac{N}{\Delta^2} \geq \frac{1}{\Delta} = \frac{1}{(k+2)T} \geq
\frac{1}{4T \ln T}.
\end{align*}

Theorem \ref{thm:t-3/4} now follows. \myqed

As we show next, the upper bound of Lemma \ref{lem:int-seq} is
tight. To construct a pair of 
diagrams where the $a_i$ and $b_i$ are integers
and $\Delta = \Omega(\sqrt{N T \ln T})$ we argue as follows. Let
$T=2^k$, $c=2$, $b_1=1$ and $b_k=T$. Then $\Delta$ is at least
$\Omega(\sqrt{N T \ln T})$. However, the
tightness of Theorem \ref{thm:t-3/4} does not follow from this since
it is not clear that there exist corresponding permutations.

\section{Conclusions}
\label{sec:conclusions}
A number of interesting directions remain for further research.\\

{\bf Characterize extremal permutations.} The permutations constructed
in Section \ref{sec:t=1} achieve the 
maximum difference in the shapes for one transposition. There it
was possible to construct the examples by carefully arranging
the increasing and decreasing sequences. On the other hand,
with the help of a computer, we
observed several other examples whose structure we
do not completely understand. We know that for $\Delta$ to achieve the
upper bound, by Greene's Theorem, the permutations must be
decomposable into unions of increasing sequences whose sizes are given by the
required shape of the diagram. An example of such a pair of
permutations from simulation for $n=18$ is: 
\begin{align*}
 13 \ 14 \ 10 \ 15 \ 6 \ 1 \ 18 \ 2 \ 16 \ \underline{9
    \ 11}  \ 12 \ 3 \ 7 \ 17 \ 8 \ 4 \ 5\\
13 \ 14 \ 10 \ 15 \ 6 \ 1 \ 18 \ 2 \ 16 \ \underline{11
    \ 9} \ 12 \ 3 \ 7 \ 17 \ 8 \ 4 \ 5\\
\end{align*}
Notice that in this example the permutations cannot be
decomposed into conjugate increasing and decreasing sequences
as done in our construction. In our view the class of such
permutations is an intriguing mathematical object. We would
like to know how many such permutations exist, what their
structural properties are etc. This seems like a good subject
for further work in this area.
\vspace{0.1in}

{\bf Constructions for $t>1$ transpositions.} As mentioned, we do not
know whether there exists a pair of permutations corresponding to the
diagrams which are tight for Lemma \ref{lem:int-seq}. We do not see
how to extend our construction for one transposition to this
case. 

Secondly, our constructions achieve $\Omega(\sqrt{nt/2})$
differences when $t \le n/2$. The behavior for larger $t$ is still
unclear. For example, the maximum possible value of $\Delta$ is $n-1$,
and this is uniquely achieved with $t={n \choose 2}$ transpositions.
We also know from Theorem~\ref{thm:construction:t>1} that we can
make $\Delta \ge \Omega(n)$ with $t\le O(n)$. We still do not know
how large $t$ should be to make
$\Delta \ge \alpha n$ with $\alpha$ close to $1$.
\vspace{0.1in}

{\bf Dependence on transpositions.} It would be interesting to
obtain more detailed information about
the change in $\Delta$ as a result of left-multiplication with a
transposition. Knuth and Knuth-dual equivalence
classes characterize transpositions which keep $\Delta$ fixed. What is
the expected change in $\Delta$ for a transposition in a random
permutation? How do the position of the transposition or properties of
the permutation affect the change?\vspace{0.1in}

{\bf Other metrics.} In this work we studied the adjacent transposition metric
on permutations but there are a number of natural measures for the
distance between two permutations which may be worth studying in this
setting. 

For example it can be verified that up to constants, the same bounds
on the Lipschitz constant hold for the distance $d'$ on permutations
with respect to general (not 
necessarily adjacent) transpositions. The lower bounds from Theorems
\ref{thm:lower-bound-one-transp} and \ref{thm:construction:t>1} hold
since the constructions give permutations $\pi$ and $\tau$ which
differ by $1$ and $t$ transpositions respectively. On the other hand,
the upper bounds on 
the Lipschitz constant follow (and are within a constant factor of the
bounds for adjacent transpositions) since by Greene's
Theorem the bounds in Proposition \ref{prop:running-sum-t1} change
only by a small additive constant and in 
Lemma \ref{lem:running-sum} this translates to the absolute value of
the difference between the sums being bounded by $2t$ if the
permutations differ by the multiplication of $t$ transpositions.

\appendix

\section{The Method of Lagrange Multipliers}
\label{app:lagrange}
The method of Lagrange multipliers is used to solve for
the maxima or minima of a real-valued multivariate function subject to
equality constraints. In 
particular, the method gives {\em necessary conditions} for optimality
which are the analog of the conditions on the 
gradient for unconstrained problems. The Karush-Kuhn-Tucker (KKT)
conditions for optimality generalize these to the case
when some of the constraints may be inequalities.
Consider the following optimization problem, where
$\mathbf{x}=(x_1,\dots,x_n)$ and $\alpha_j,\beta_k \in \mathbb{R}$:
\begin{align*}
& \min  f(\mathbf{x})\\
s.t. \ \ & g_j(\mathbf{x}) \le \alpha_j, \ \ j = 1,\dots,\ell\\
& h_k(\mathbf{x}) = \beta_k \ \ k= 1,\dots, m.
\end{align*}
The {\em Lagrangian} for this problem is defined to be the function:
\[
\mathcal{L} := \mathcal{L}(\mathbf{x},\lambda_j,\mu_k) = f(\mathbf{x})
- \displaystyle\sum_{j=1}^\ell \lambda_j ( g_j(\mathbf{x}) - \alpha_j) -
\displaystyle\sum_{k=1}^m \mu_k ( h_k(\mathbf{x}) - \beta_k). 
\]
The KKT conditions say that if a local optimizer $\mathbf{x}^*$
satisfies certain technical ``constraint qualifications'' (explained
below) then there are constants $\lambda_j^*$ ($j=1,\dots,\ell$) and
$\mu_k^*$ ($k=1,\dots,m$) satisfying
\begin{align*}
\nabla{\mathcal{L}} = \nabla{f(\mathbf{x}^*)}
- \displaystyle\sum_{j=1}^\ell \lambda_j^* \nabla{g_j(\mathbf{x}^*)} &
- \displaystyle\sum_{k=1}^m \mu_k^* \nabla{h_k(\mathbf{x}^*)} = 0 \\
\lambda_i^* \ge 0,   & \ \ j = 1,\dots,\ell\\
\lambda_i^*g_i(\mathbf{x}^*) = 0,   & \ \ j = 1,\dots,\ell.\\
\end{align*}
A number of constraint qualifications are known to be sufficient for the
result and in our case, the so-called {\em Mangasarian-Fromovitz}
  constraint qualification holds. This condition requires that at
$\mathbf{x}^*$, the gradients of any active inequality constraints and
the gradients of the equality constraints are {\em positively-linearly
  independent}. A collection of vectors $(v_1,\dots,v_d)$ is
positively-linearly dependent if there are $a_1\ge 0,\dots,a_d \ge 0$,
not all $0$ such that $\sum_i a_iv_i = 0$. for the optimization
problems we consider, the constraint
qualification can be verified without much difficulty for any possible set
of active constraints, so we leave this to the reader
and assume that the KKT conditions are satisfied. For more details
regarding the method of Lagrange multipliers and extensions, the
reader may refer to \cite{Ber}.

\section{Solution to the Minimization Problem}
\label{sec:minimization}
We show below the calculations that solve the minimization problem in
\eqref{eq:min-ab-is-T} which is reproduced below. 
\begin{align*}
\min \frac{N}{\Delta^2}\\
s.t.   \ \ \ \ \ \ \ a_i=1=b_{k+j}, a_{k+j}=T=b_i, & \quad -\ell_1
\le i \le -1, \quad 1 \le j \le \ell_2\\
a_ib_i = T, & \quad \forall i \\
a_i,b_i \geq 1 & \quad 1 \le i \le k 
\end{align*}

\begin{proposition}
At a minimum of the optimization, the sequences ${b_{1 \leq i \leq
    k}}$ and (hence ${a_{1 \leq i \leq k}}$) are a geometric series with 
\[
c= \frac{b_1}{b_2} = \dots = \frac{b_{k-1}}{b_{k}}
\]
and
\[
c= \frac{a_2}{a_1} = \dots = \frac{a_{k}}{a_{k-1}}
\]
and the ratio between successive terms $c>1$.
Also,
\[
b_k = (c-1)\ell_2.
\]
and
\[
a_1 = (c-1)\ell_1.
\]
\label{prop:geometric}
\end{proposition}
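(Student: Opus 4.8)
The plan is to run the Karush--Kuhn--Tucker machinery of Appendix~\ref{app:lagrange} on the program \eqref{eq:min-ab-is-T}. First I would note that, by the way the relabeling was chosen, at the minimum every middle block has $a_i>1$ and $b_i>1$ strictly: a middle block with $a_i=1$ (hence $b_i=T$) could be merged with the $\ell_1$ boundary blocks and $k$ decreased, and likewise for $b_i=1$. Consequently the multipliers of the constraints $a_i\ge1,\ b_i\ge1$ all vanish, and for each $1\le i\le k$ the surviving stationarity conditions are
\[
\frac{\partial}{\partial a_i}\frac{N}{\Delta^2}=\lambda_i b_i,\qquad
\frac{\partial}{\partial b_i}\frac{N}{\Delta^2}=\lambda_i a_i,
\]
with $\lambda_i$ the multiplier of $a_ib_i=T$. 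Writing $U_i=\sum_{j=i}^{k}b_j+\ell_2$ and $V_i=\sum_{m=1}^{i}a_m+\ell_1$ (so that $\partial N/\partial a_i=U_i$ and $\partial N/\partial b_i=V_i$), I multiply the first equation by $a_i$ and the second by $b_i$; since $a_i\,\partial\Delta/\partial a_i=a_ib_i=T=b_i\,\partial\Delta/\partial b_i$, the $\Delta$-derivative contributions cancel and $\lambda_i$ drops out, leaving the clean identity
\[
a_i\,U_i=b_i\,V_i,\qquad 1\le i\le k. \qquad (\star)
\]

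Next I would combine $(\star)$ with the recursions $U_i=U_{i+1}+b_i$, $V_{i+1}=V_i+a_{i+1}$ and the constraint $a_ib_i=T$. Substituting into $(\star)$ at indices $i$ and $i+1$ and clearing denominators, a short computation (using $a_i=T/b_i$) collapses everything to
\[
(b_i-b_{i+1})\,V_i=T \quad\text{and}\quad (a_{i+1}-a_i)\,U_{i+1}=T,\qquad 1\le i\le k-1 .
\]
In particular $b_i>b_{i+1}$ for every $i$, so the blocks are strictly ordered. Feeding these two identities back into $(\star)$ at an interior index $i$ with $2\le i\le k-1$ yields $a_{i-1}/a_i=b_{i+1}/b_i$, and since $a_{i-1}b_{i-1}=a_ib_i=T$ gives $a_{i-1}/a_i=b_i/b_{i-1}$, we conclude $b_i/b_{i-1}=b_{i+1}/b_i$. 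Hence $b_1,\dots,b_k$ is a geometric progression, say with ratio $1/c$; since it is strictly decreasing, $c>1$, and $a_i=T/b_i$ is then geometric with ratio $c$. (The degenerate cases $k=1,2$ follow directly from $(\star)$ and need no progression argument.)

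Finally, for the two boundary identities I would specialize $(\star)$ to $i=1$ and $i=k$. At $i=1$, the identity $(b_1-b_2)V_1=T$ together with $b_1-b_2=b_1(1-1/c)$ gives $b_1V_1=Tc/(c-1)$, hence $(\star)$ forces $U_1=Tc/\bigl(a_1(c-1)\bigr)$; on the other hand $U_1=\sum_{j=1}^k b_j+\ell_2=b_k\frac{c^k-1}{c-1}+\ell_2$, and using $T=a_1b_1=a_1b_kc^{k-1}$ to rewrite the left side, the powers of $c$ telescope to leave exactly $b_k=(c-1)\ell_2$. The computation at $i=k$ is the mirror image (swapping the roles of the $a$'s and $b$'s and of $\ell_1,\ell_2$) and produces $a_1=(c-1)\ell_1$. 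I expect the only real friction here to be bookkeeping: differentiating $N=\sum_{i\le j}a_ib_j$ correctly across the three index ranges, verifying the cancellation that produces $(\star)$, and shepherding the geometric sums in the last step; the structural content is entirely in $(\star)$ and its two consequences above.
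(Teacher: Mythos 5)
Your proof is correct and follows the same high-level strategy as the paper: apply the KKT stationarity conditions to the relaxed program, observe that the multipliers of the active inequality constraints vanish on the interior indices, and multiply through by $a_i$ and $b_i$ so that the $\lambda_i a_ib_i=\lambda_i T$ terms and the $\Delta$-derivative terms cancel, yielding the same key identity $a_i U_i=b_i V_i$ (equation \eqref{eq:general-equality} in the paper). Where you diverge is in the algebra that follows. The paper divides \eqref{eq:general-equality} by $a_i$, introduces the harmonic-type quantities $H_i=\ell_1+\sum_{j\le i}1/b_j$, derives the recurrence $b_iH_i=b_{i+1}H_{i+1}$, and then shows $H_i/H_{i-1}=H_{i+1}/H_i$; for the boundary identities it multiplies \eqref{eq:general-equality} by $c-1$ and telescopes. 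You instead extract the cleaner ``gap'' identities $(b_i-b_{i+1})V_i=T$ and $(a_{i+1}-a_i)U_{i+1}=T$ (which I verified: subtracting consecutive instances of $TU_i=b_i^2V_i$ and using $a_ib_i=T$ collapses exactly as you claim, and the one-step-shifted pair fed back into $(\star)$ does give $a_{i-1}b_i=a_ib_{i+1}$), and then obtain the boundary conditions by summing the geometric series in $U_1$. Both derivations are elementary and of comparable length; your intermediate identities isolate the structural content more transparently, and in particular give the strict monotonicity $b_i>b_{i+1}$ for free, whereas the paper gets $c>1$ by a separate check that $c=1$ violates the recurrence. The only soft spot — shared with the paper — is the tacit assumption $k\ge 2$ needed for the ratio $c$ to be defined; you flag $k=1,2$ but, as you note, the $k=2$ case is already covered by the general boundary computation, and $k\le 1$ makes the geometric-progression assertion vacuous.
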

\begin{proof}
We obtain the following Lagrangian.
\begin{align*}
\min \mathcal L  = & \frac{N} 
{\Delta^2} - \sum_{i}\lambda_i(a_ib_i-T)-
\sum_{i=1}^k \mu_i(a_i - 1)-
\sum_{i=1}^{k} \nu_i(b_i - 1) \nonumber \\
\end{align*}

From the KKT conditions for optimality, we obtain:

\begin{align}
& \frac{\partial}{\partial a_i}\mathcal L = \frac{\partial}{\partial
  a_i}\frac{N}{\Delta^2} - \lambda_ib_i - \mu_i = 0,  \quad & 1 \leq i \leq k
  \label{eq:simp-del-a}\\
& \frac{\partial}{\partial b_i}\mathcal L = \frac{\partial}{\partial 
  b_i}\frac{N}{\Delta^2} - \lambda_ia_i - \nu_i = 0,  \quad & 1 \leq i \leq
  k \label{eq:simp-del-b}\\
\nonumber
& \mu_i \geq 0, \quad \mu_i(a_i-1) = 0, \quad & 1 \leq
i \leq k \\ 
\nonumber
& \nu_i \geq 0, \quad \nu_i(b_i-1) = 0, \quad & 1\leq i \leq k
\end{align}

As outlined before, we may assume that the optimal solution is such
that $a_i,b_i >1$ for $1 \le i \le k$. Hence by the conditions above,
$\mu_i=\nu_i=0$. 
Performing the differentiations in \eqref{eq:simp-del-a}
(w.r.t. $a_i$) and  \eqref{eq:simp-del-b} (w.r.t. $b_i$) and
multiplying them by
$a_i$ and $b_i$ respectively we obtain the following relations.

\begin{equation}
\frac{a_ib_i N}{ \Delta^4} -
\lambda_i a_ib_i -
\frac{a_i(b_i+\dots+b_k+b_{k+1}+\dots+b_{k+\ell_2})}{\Delta^4} = 0, 
\quad 1 \leq i \leq k
\label{eq:Lagrange-equality-1}
\end{equation}
\begin{equation}
\frac{a_ib_i N}{\Delta^4} -
\lambda_i a_ib_i - \frac{b_i(a_{-\ell_1}+\dots+a_{-1}+a_1+\dots + a_i)}{\Delta^4} = 0, 
\quad 1 \leq i \leq k
\label{eq:Lagrange-equality-2}
\end{equation}

Equating \eqref{eq:Lagrange-equality-1} and
\eqref{eq:Lagrange-equality-2} and cancelling terms, we conclude that since 
$\Delta \neq 0$,

\begin{align}
a_i(b_i+\dots+b_k + \ell_2) = b_i(\ell_1+a_1+\dots+a_i), \quad  1 \leq i \leq k
\label{eq:general-equality}
\end{align}

We can solve the above set of relations as
follows. Dividing \eqref{eq:general-equality} by $a_i$ and using the
equations corresponding to $i$ and $i+1$ there, and that $a_ib_i=T$,
after rearranging terms we obtain the following relations:
\begin{align*}
b_i+ b_{i+1}+\dots+b_k  +\ell_2 & = \frac{b_i}{a_i}(\ell_1+a_1+\dots+a_i) =
b_i^2\left(\ell_1+\frac{1}{b_1}+\dots \frac{1}{b_i}\right)\\
b_{i+1}+\dots+b_k  +\ell_2 & = \frac{b_{i+1}}{a_{i+1}}(\ell_1+a_1+\dots+a_{i+1}) =
b_{i+1}^2\left(\ell_1+\frac{1}{b_1}+\dots+ \frac{1}{b_{i+1}}\right)
\end{align*}
Subtracting we obtain
\begin{align*}
b_i & =   b_i^2\left(\ell_1+\frac{1}{b_1}+\dots
\frac{1}{b_i}\right) - b_{i+1}^2\left(\ell_1+\frac{1}{b_1}+\dots+
\frac{1}{b_{i+1}}\right)  \\
& = ( b_i^2  - b_{i+1}^2)\left(\ell_1+\frac{1}{b_1}+\dots
\frac{1}{b_i}\right) - b_{i+1}, \quad 1 \leq i \leq k
\end{align*}
Rearranging, 
\begin{align}
b_i\left(\ell_1+\frac{1}{b_1}+\dots
\frac{1}{b_{i}}\right) =  b_{i+1}\left(\ell_1+\frac{1}{b_1}+\dots+
\frac{1}{b_{i+1}}\right), \quad 1 \leq i \leq k
\label{eq:b-recurrence}
\end{align}
Let \[H_i = \ell_1+\frac{1}{b_1}+\dots \frac{1}{b_{i}}\] so that
\[ \frac{1}{b_i} = H_i - H_{i-1}.\]
Rearranging \eqref{eq:b-recurrence} and manipulating both sides, we have
\begin{align*}
& \frac{H_i}{b_{i+1}} = \frac{H_{i+1}}{b_i} \\
\Rightarrow \quad & H_i(H_{i+1}-H_i) = H_{i+1}(H_i - H_{i-1})\\
\Rightarrow \quad & \frac{H_i}{H_{i-1}} = \frac{H_{i+1}}{H_i} \\
\Rightarrow \quad & \frac{b_{i-1}}{b_i} = \frac{b_{i}}{b_{i+1}}, \quad
1 < i <  k. 
\end{align*}
In other words, we can conclude that ${b_{1 \leq i \leq k}}$ (and
hence ${a_{1 \leq i \leq k}}$) is a
geometric series.
Let 
\[
c= \frac{b_1}{b_2} = \dots = \frac{b_{k-1}}{b_{k}}
\]
and
\[
c= \frac{a_2}{a_1} = \dots = \frac{a_{k}}{a_{k-1}}.
\]
It can be checked that $c > 1$ since for $c=1$
\eqref{eq:b-recurrence} is not satisfied.

Next, suppose we multiply the equation \eqref{eq:general-equality} by
$c-1$, we obtain

\begin{align*}
(c-1)a_i(b_i+\dots+b_k+\ell_2) & = (c-1)b_i(\ell_1+a_1+\dots+a_i) \\
\Rightarrow \quad \quad \quad a_i(cb_i - b_k +(c-1)\ell_2) & = b_i(ca_i - a_1
+(c-1)\ell_1) \\
\Rightarrow \quad \ \quad \quad \quad \frac{a_i}{b_i}(- b_k +(c-1)\ell_2) & = - a_1
+(c-1)\ell_1 \\ 
\end{align*}
Since the right hand side of the last equality is the same for all
$1 \le i \le k$, from $i=1$ and $i=k$, we obtain
\begin{align*}
 \left(\frac{a_1}{b_1}-\frac{a_k}{b_k}\right)(- b_k +(c-1)\ell_2) = 0
\end{align*}

Now since $c>1$, $\frac{a_1}{b_1} \ne \frac{a_k}{b_k}$ and therefore
\[
b_k = (c-1)\ell_2.
\]

By similar arguments,
\[
a_1 = (c-1)\ell_1. \qedhere
\]
\end{proof}
\end{document}